\newtheorem{prop}[subsection]{Proposition}
\newtheorem{lemma}[subsection]{Lemma}
\newtheorem{thm}[subsection]{Theorem}
\newtheorem{coro}[subsection]{Corollary}
\newtheorem{uthm}{Theorem}
\theoremstyle{definition}
\newtheorem{defi}[subsection]{Definition}
\newtheorem{rmk}[subsection]{Remark}
\newtheorem{ex}[subsection]{Example}
\numberwithin{equation}{subsubsection}
\renewcommand{\mathcal}{\mathscr}
\newcommand{\Nat}{\mathbb{N}}
\newcommand{\Z}{\mathbb{Z}}
\renewcommand{\H}{\mathrm{H}}
\newcommand{\GL}{{\bf {GL}}}
\newcommand{\Of}{\mathcal{O}}
\newcommand{\Sgoth}{\mathfrak{S}}
\newcommand{\ab}{\mathrm{ab}}
\newcommand\Cf{\mathcal{C}}
\DeclareMathOperator{\Char}{Char}
\DeclareMathOperator{\Det}{Det}
\newcommand{\fl}{\rightarrow}
\newcommand{\ra}{\rightarrow}
\def\flnom#1{\stackrel{#1}{\fl}}
\DeclareMathOperator{\Frac}{Frac}
\newcommand{\id}{\mathrm{id}} 
\newcommand{\iso}{\stackrel{\sim}{\fl}}
\DeclareMathOperator{\Ker}{Ker}
\DeclareMathOperator{\LPC}{LPC}
\newcommand{\Mcal}{\mathcal{M}} 
\mathchardef\mhyphen="2D
\newcommand\mfrak{\mathfrak{m}}
\DeclareMathOperator{\Ob}{\mathrm{Ob}}
\DeclareMathOperator{\PChar}{PChar}
\newcommand\quash[1]{}
\newcommand\Set{\mathbf{Set}}
\DeclareMathOperator\sgn{sgn}
\DeclareMathOperator{\Spec}{Spec}
\DeclareMathOperator\Tor{Tor}
\DeclareMathOperator\Tr{Tr}
\newcommand{\univ}{\mathrm{univ}} 
\newcommand{\setword}[2]{%
  \phantomsection
  #1\def\@currentlabel{\unexpanded{#1}}\label{#2}%
}
\title{Comparison of different definitions of pseudocharacters}
\date{\today.}
\author{Kathleen Emerson and Sophie Morel}
\address{CNRS et Unit\'e De Math\'ematiques Pures Et Appliqu\'ees,
ENS de Lyon,
69342 Lyon Cedex 07,
France.\hfill\break
\tt http://perso.ens-lyon.fr/sophie.morel/,
sophie.morel@ens-lyon.fr.}
\subjclass[2020]
{Primary
14M35, 
13A50; 
Secondary
14L24, 
14R20 
}
\keywords{
pseudorepresentations,
pseudocharacters,
character variety,
invariant theory
}
\begin{document}

\begin{abstract}
We prove that the definitions of a $d$-dimensional pseudocharacter
(or pseudorepresentation) given by Chenevier and V. Lafforgue agree over
any ring. We also compare the scheme of Lafforgue's $G$-pseudocharacters
of a group with its $G$-character variety.

\end{abstract}

\maketitle

Pseudocharacters (also known as pseudorepresentations) were first
introduced by Wiles (\cite{Wiles}) and Taylor (\cite{Taylor}) to
study congruences modulo power of prime numbers
between representations of Galois groups.
They defined congruence between representations as congruence between
their characters, and this led them to the introduction of
pseudocharacters, which are functions on a group whose properties
mimick those of the character of a finite-dimensional representation.
More precisely, let $\Gamma$ be a group, $A$ be a commutative unital
ring and $d$ be a positive integer. A map $T:\Gamma\ra A$ is a
\emph{$d$-dimensional pseudocharacter} if $d!\in A^\times$,
$T(1)=d$, $T(\gamma_1\gamma_2)=
T(\gamma_2\gamma_1)$ for all $\gamma_1,\gamma_2\in\Gamma$ and $T$
satisfies the $d$-dimensional \emph{pseudo-character identity}:
for all $\gamma_1,\ldots,\gamma_{d+1}\in\Gamma$,
\[\sum_{\sigma\in\Sgoth_{d+1}}\sgn(\sigma)T^\sigma(\gamma_1,\ldots,\gamma_{d+1})=0,\]
where, if the decomposition into cycles of $\sigma\in\Sgoth_{d+1}$
(including trivial cycles) is
\[\sigma=(i_{1,1}i_{1,2}\ldots i_{1,n_1})\ldots(i_{k,1}i_{k,2}\ldots i_{k,n_k}),\]
then
\[T^\sigma(\gamma_1,\ldots,\gamma_{d+1})=T(\gamma_{i_{1,1}}\gamma_{i_{1,2}}\ldots
\gamma_{i_{1,n_1}})\ldots T(\gamma_{i_{k,1}}\gamma_{i_{k,2}}\ldots
\gamma_{i_{k,n_n}}).\]

We denote by $\PChar_{\Gamma,d}(A)$ the set of $d$-dimensional pseudo-characters
on $\Gamma$ with values in $A$. This is a contravariant functor from
the category of commutative $\Z[1/d!]$-algebras to that of sets,
and it is not hard to see that it is representable, unlike the functor
sending $A$ to the set 
of equivalence classes of $d$-dimensional
representations $\Gamma\ra\GL_d(A)$. Moreover, if $A$ is an algebraically
closed field (in which $d!$ is invertible), then Taylor has shown 
in \cite[Theorem~1]{Taylor} that the trace
induces a bijection from the set of equivalence classes of $d$-dimensional
\emph{semi-simple} representations of $\Gamma$ over $A$ and
$\PChar_{\Gamma,d}(A)$ and, if
$k$ is a field of characteristic $0$, then Chenevier has shown in
\cite[Proposition~2.3] {Chenevier2}
that $\PChar_{\Gamma,d,\Spec k}$ is naturally
isomorphic to the $\GL_d$-character variety of $\Gamma$ over $\Spec k$.

The original definition of a $d$-dimensional
pseudocharacter does not work well in characteristic $p$ dividing $d!$,
because a semi-simple $d$-dimensional representation is no longer determined
by its trace. However, it is determined if we know all the coefficients of
its characteristic polynomial, and Chenevier (\cite{Chenevier1}) came up
with a compact way of packaging all that data in his theory of
determinants. Determinants make sense over an arbitrary ring $A$ without
the requirement that $d!\in A^\times$, and the functor $\Det_{\Gamma,d}$ that
they define is representable. Moreover, if $\rho:\Gamma\ra\GL_d(A)$ is a
morphism, then it defines a determinant $D_\rho\in\Det_{\Gamma,d}(A)$.
Chenevier shows that, if
$k$ is an algebraically closed field, then $\rho\mapsto D_\rho$ is a bijection
from the set of $d$-dimensional semi-simple representations
of $\Gamma$ over $k$ to $\Det_{\Gamma,k}(k)$.
However, it is not known whether the determinant
functor $\Det_{\Gamma,d}$ is isomorphic to the $\GL_d$-character variety of
$\Gamma$.

Vincent Lafforgue also gave a very general definition of pseudocharacters,
that this time makes sense for representations with values in any
affine group scheme $G$ over a base ring $C$. 
The definition of his pseudocharacters is
less compact, as it requires data for every invariant function on any
power $G$. Again, we obtain a contravariant
functor $\LPC_{\Gamma,G}$ on the category of commutative $C$-algebras, 
which turns out to be representable by an affine scheme.
Also, if $\rho:\Gamma\ra G(A)$ is a morphism, then
it defines a Lafforgue pseudocharacter $\Theta_\rho\in\LPC_{\Gamma,G}(A)$.
Lafforgues proves in \cite[Proposition~11.7]{Lafforgue1}
(see also \cite[Proposition~8.2]{Lafforgue2}) 
that, if $A=k$ is an algebraically closed field,
then $\rho\mapsto\Theta_\rho$ is a bijection
from the set of $G(k)$-conjugacy classes of $G$-completely
reducible morphisms $\Gamma\ra G(k)$ to $\LPC_{\Gamma,G}(k)$;
see also Theorem~4.5 of the
paper \cite{BHKT} of B\"ockle, Harris, Khare and Thorne
for the details of the proof in positive characteristic, where they also
give the more general definition of a Lafforgue pseudocharacter
(Lafforgue himself was working over a field).

In Section~2, we construct a is morphism of schemes from
$\LPC_{\Gamma,G}$ to the $G$-character variety of $\Gamma$ compatible with
the bijections of the previous paragraph. We prove that, for
$G$ a geometrically reductive group scheme with connected
geometric fibers over a Dedekind domain or a field, this morphism is
an \emph{adequate homeomorphism} in the sense of Alper
(see \cite[Definition~3.3.1]{Alper}), that is, an integral
universal homeomorphism which is
a local isomorphism at all points with residue characteristic $0$;
see Proposition~\ref{prop_charvar}.

We come back to the case $G=\GL_d$. The main result
of this note is that, over any ring, Lafforgue pseudocharacters of
$\Gamma$ are in bijection with $d$-dimensional determinants. More precisely,
we prove the following theorem:

\begin{uthm}(See Theorem~\ref{thm_main}.)
There exists an isomorphism $\alpha:\LPC_{\Gamma,\GL_d}\ra\Det_{\Gamma,d}$ such
that, for any commutative ring $A$ and any morphism $\rho:\Gamma\ra\GL_d(A)$,
$\alpha$ sends $\Theta_\rho$ to $D_\rho$.

\end{uthm}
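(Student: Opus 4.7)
The plan is to define $\alpha$ by an explicit formula coming from $\GL_d$-invariant theory of tuples of matrices, and then to prove bijectivity on $A$-points via Donkin's theorem on generators of the relevant invariant ring. By Chenevier's reformulation, a $d$-dimensional determinant $D$ over $A$ is a multiplicative, unital polynomial law of degree $d$ from $A[\Gamma]$ to $A$; such a law is determined by the polynomial expressions
\[D_B\!\left(\sum_{i=1}^n b_i[\gamma_i]\right)\in B\]
for every commutative $A$-algebra $B$, every $n\ge 1$, every $(\gamma_1,\dots,\gamma_n)\in\Gamma^n$ and every $(b_1,\dots,b_n)\in B^n$. Given a Lafforgue pseudocharacter $\Theta$, I set
\[D_B\!\left(\sum_i b_i[\gamma_i]\right)=\sum_{\substack{a_1+\cdots+a_n=d\\ a_i\ge 0}}\Theta_n(f_{a_1,\dots,a_n})(\gamma_1,\dots,\gamma_n)\,b_1^{a_1}\cdots b_n^{a_n},\]
where $f_{a_1,\dots,a_n}\in\mathcal{O}(\mathrm{Mat}_d^n)^{\GL_d}\subset\mathcal{O}(\GL_d^n)^{\GL_d}$ is the coefficient of $r_1^{a_1}\cdots r_n^{a_n}$ in $\det(r_1x_1+\cdots+r_nx_n)$ in the generic matrices $x_i$. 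Homogeneity of degree $d$ and naturality in $B$ are built into the formula; unitality and multiplicativity follow from $\det(I)=1$ and $\det(uv)=\det(u)\det(v)$ passed through the Lafforgue axioms, which encode compatibility of $\Theta$ with the monoid structure on $\GL_d$ and $\Gamma$. The compatibility $\alpha(\Theta_\rho)=D_\rho$ is then immediate, because $\Theta_\rho$ evaluates each invariant $f$ on the tuple $(\rho(\gamma_1),\dots,\rho(\gamma_n))$, so the formula collapses to $\det(\sum_i b_i\rho(\gamma_i))=D_\rho(\sum_i b_i[\gamma_i])$.

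To show $\alpha_A$ is a bijection for every $A$, the essential input is Donkin's theorem: the ring $\mathcal{O}(\mathrm{Mat}_d^n)^{\GL_d}$ is generated over $\Z$ by the coefficients of the characteristic polynomials $\det(T\cdot 1-w)$ where $w$ runs over monomials in the generic matrices $x_1,\dots,x_n$, uniformly in any characteristic; inverting the $\det(x_i)$ yields $\mathcal{O}(\GL_d^n)^{\GL_d}$. Injectivity then follows at once, since a Lafforgue pseudocharacter is determined by the values of $\Theta_n$ on these generators, and those values are precisely the characteristic polynomial coefficients of the products $[\gamma_{i_1}]\cdots[\gamma_{i_k}]\in A[\Gamma]$, which are encoded in $D=\alpha(\Theta)$.

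Surjectivity will be the main obstacle. Given $D$, one defines $\Theta_n(f)(\gamma)$ on Donkin's generating set by reading off the appropriate characteristic polynomial coefficients from $D$; the real work is to check that this assignment descends past the ideal of relations among those generators and respects all the Lafforgue axioms (compatibility with restriction and multiplication maps between the $\GL_d^n$, with products in $\Gamma$, and so on). The key point is that every polynomial identity between characteristic polynomial coefficients of $d\times d$ matrices over arbitrary commutative rings is a formal consequence of the Cayley--Hamilton identity, which Chenevier proves holds for every $d$-dimensional determinant, together with multiplicativity and the polynomial-law structure of $D$. A convenient way to organize the verification is to pass through Chenevier's universal Cayley--Hamilton quotient of $A[\Gamma]$ attached to $D$ and pull back the Lafforgue data from its tautological matrix-valued ``representation.'' Once this is done, Yoneda upgrades the pointwise bijections into the desired isomorphism of representing affine schemes, which by construction sends $\Theta_\rho$ to $D_\rho$.
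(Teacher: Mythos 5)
Your construction of $\alpha$ and your treatment of injectivity track the paper closely: the formula you give for $D_B\bigl(\sum_i b_i[\gamma_i]\bigr)$ is exactly the expansion of the paper's $\alpha^1_X(\Theta)=u_\Theta\circ\det_{\id_X}$, the compatibility $\alpha(\Theta_\rho)=D_\rho$ is immediate in both treatments, and injectivity in both cases comes down to Donkin's generation of $\Of(\GL_d^n)^{\GL_d}$ by characteristic-polynomial coefficients of words together with the observation that $\Of(\GL_d^X)^{\GL_d}$ is a localization of $\Of(M_d^X)^{\GL_d}$ at the invariant functions $\det_x$.

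The gap is in surjectivity. You define $\Theta_n(f)$ on Donkin's generators and then assert that the assignment ``descends past the ideal of relations'' because every polynomial identity among characteristic-polynomial coefficients of $d\times d$ matrices over arbitrary commutative rings is a formal consequence of Cayley--Hamilton. That assertion is not a routine verification: over $\Z$ it is essentially the second fundamental theorem of matrix invariants, and in the polynomial-law formulation it is precisely Vaccarino's theorem that $E_X(d)=\Of(M_d^X)^{\GL_d}$ represents $\Det_{\Z\{X\},d}$ --- the one deep external input of the paper. It does not follow from Chenevier's result that every determinant satisfies Cayley--Hamilton; that gives you identities that every $D$ obeys, not that these identities generate all relations among the generators. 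Moreover, the mechanism you propose for organizing the check, pulling back the Lafforgue data from ``the tautological matrix-valued representation'' of the Cayley--Hamilton quotient, does not exist in general: for an arbitrary determinant over an arbitrary ring the Cayley--Hamilton quotient of $A[\Gamma]$ need not embed into a matrix algebra, so there is nothing to pull back along. The paper avoids both problems by never constructing the inverse pointwise: Vaccarino's theorem identifies the representing ring of $\Det_{\Z\{X\},d}$ with $\Of(M_d^X)^{\GL_d}$, so that $\alpha^1_X$ is literally $\Spec$ of the localization $\Of(M_d^X)^{\GL_d}\ra\Of(\GL_d^X)^{\GL_d}$; the group case then reduces to verifying condition (LPC2) for the single element $(\alpha^1_X)^{-1}(D^{\univ})$, which is done by a characteristic-polynomial computation on Donkin's generators (together with a base-change argument reducing the whole statement from $A$ to $\Z$). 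To repair your argument you must either invoke Vaccarino's theorem explicitly at the point where you discharge the relations, or work only with the universal objects, where the generic matrices representation plays the role of your ``tautological representation.''
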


The plan of the paper is as follows. In Section~\ref{Det}, we recall
Chenevier's definition of determinants and the properties of determinants
that we will need; in particular, we give Vaccarino's
formula for the universal determinant ring of a free monoid (see
\cite{Vaccarino} and \cite[Theorem~1.15]{Chenevier2}).
In Section~\ref{LPC}, we define Lafforgue pseudocharacters
and give some of their basic properties, in particular the (easy) fact
that they define a representable functor and the relationship with the
character variety (Proposition~\ref{prop_charvar}).
We construct the morphism
$\alpha$ in Section~\ref{LPCDet} and, in Section~\ref{DetLPC}, we prove that
this morphism is invertible. The main ingredients are 
Donkin's (\cite{Donkin}) description of
generators of $\Of(\GL_{d,\Z}^n)^{\GL_{d,\Z}}$ and the result of Vaccarino
recalled in Section~\ref{Det}.

We thank Julian Quast and Olivier Ta\"ibi for useful discussions.

\section{Determinants of algebras}
\label{Det}

In this section, we recall the definition of determinants by
Chenevier (\cite{Chenevier2}) and some basic properties of the
functor of determinants. Throughout this section, $A$ denotes a commutative
unital ring, and $A$-algebras are always supposed to be associative and
unital.
Let $\Cf_A$ be the category of commutative
$A$-algebras and $\Set$ be the category of sets.
We first recall Roby's definition of a polynomial law (\cite{Roby}, I.2),
a homogeneous polynomial law (\cite{Roby}, I.8) and a multiplicative
polynomial law (\cite{Roby2}, III.4).

\begin{defi}
Let $M$ and $N$ be $A$-modules. A \emph{polynomial law} $f:M\ra N$ is
a morphism of functors from $\Cf_A$ to $\Set$ between the functors
$B\mapsto M\otimes_A B$ and $B\mapsto N\otimes_A B$. For every object
$B$ of $\Cf_A$, we denote the resulting map $M\otimes_A B\ra N\otimes_A B$
by $f_B$.

\end{defi}

\begin{defi}
Let $M$ and $N$ be $A$-modules,
and let $f:M\ra N$ be a polynomial law. Let $d\in\Nat$.
\begin{itemize}
\item[(1)] We say that $f$ is \emph{homogeneous of degree $d$} if,
for all $B\in\Cf_A$, $x\in M\otimes_A B$ and $b\in B$, we have
\[f_B(bx)=b^d f_B(x).\]
\item[(2)] Suppose that $M$ and $N$ are $A$-algebras. We say that $f$
is \emph{multiplicative} if, for every $B\in\Cf_A$, we have $f_B(1)=1$ and
$f_B(xy)=f_B(x)f_B(y)$ if $x,y\in M\otimes_A B$.

\end{itemize}

\end{defi}

If $M$ and $N$ are $A$-algebras and $d\in\Nat$, we write
$\Mcal_A^d(M,N)$ for the set of multiplicative polynomial laws from $M$ to $N$
that are homogeneous of degree $d$.

The following definition is due to Chenevier (see \cite{Chenevier2}, 1.5).

\begin{defi}
Let $R$ be an $A$-algebra. An \emph{$A$-valued $d$-dimensional determinant
on $R$} is an element of $\Mcal_A(R,A)$. When $R=A[\Gamma]$ for some group
$\Gamma$, we also talk about
\emph{$A$-valued $d$-dimensional determinants on $\Gamma$}.

\end{defi} 

We write $\Det_{R,d}(A)$ (resp. $\Det_{\Gamma,d}(A)$) for the set of
$A$-valued $d$-dimensional determinants on $R$ (resp. $\Gamma$);
so $\Det_{\Gamma,d}(A)=\Det_{A[\Gamma],d}(A)$.

\begin{ex}\label{ex_det}
\begin{itemize}
\item[(1)] Let $X$ be a set, and write $A\{X\}$ for the free unital associative
$A$-algebra on $X$. Any map $\rho:X\ra M_d(A)$ gives rise to an $A$-valued
$d$-dimensional determinant $D_\rho$ on $A\{X\}$ as follows. For any object
$B$ of $\Cf_A$, the map $\rho$ induces a morphism of $B$-algebras
$\rho_B:B\{X\}\ra M_d(B)$, where $M_d$ is the scheme
of $d\times d$ matrices; moreover, if $B\ra B'$ is a morphism of
commutative $A$-algebras, then we get a commutative diagram:
\[\xymatrix{B\{X\}\ar[d]\ar[r]^-{\rho_B} & M_d(B)\ar[d] \\
B'\{X\}\ar[r]_-{\rho_{B'}} & M_d(B')}\]
We define $D_{\rho,B}:B\{X\}\ra B$ by $D_{\rho,B}(\omega)=\det(\rho_B(\omega))$,
where $\det$ is the usual determinant on $M_d(B)$.

\item[(2)] Let $\Gamma$ be a group and $\rho:\Gamma\ra\GL_d(A)$ be a morphism
of groups. Then we get an $A$-valued $d$-dimensional determinant
$D_\rho$ on $A[\Gamma]$ in a similar way: for every commutative
$A$-algebra $B$ and
every $x\in B[\Gamma]$, we set $D_{\rho,B}=\det(\rho_B(x))$, where
$\rho_B:B[\Gamma]\ra M_d(B)$ sends an element $\sum_{i=1}^n b_i\gamma_i$ of
$B[\Gamma]$, with $b_i\in B$ and $\gamma_i\in\Gamma$, to
$\sum_{i=1}^n b_i\rho(\gamma_i)$.

\end{itemize}
\end{ex}

\begin{ex}\label{ex_Lambda}
Let $R$ be an $A$-algebra and $D\in\Mcal_A^d(R,A)$. Following
Chenevier (\cite{Chenevier2}, 1.10), we can define
polynomial laws $\Lambda_i:R\ra A$, for $0\leq i\leq d$, as follows. 
For any commutative $A$-algebra $B$ and any $x\in R\otimes_A B$, set
\[D_{B[T]}(T-x)=\sum_{i=0}^d\Lambda_{i,B}(x)T^{d-i}.\]
Note that $\Lambda_i$ is homogeneous of degree $i$.
If $D=D_\rho$ for $\rho$ as in Example~\ref{ex_det}(1) or (2), then
$\Lambda_{i,B}(x)$ is the coefficient of the degree $d-i$ term in the
characteristic polynomial of the matrix $\rho_B(x)\in M_d(B)$.

\end{ex}

\begin{defi}
Let $R$ be an $A$-algebra and $d\in\Nat$.
The \emph{determinant functor} $\Det_{R,d}:\Cf_A\ra\Set$ is defined by
\[\Det_{R,d}(B)=\Mcal_B^d(R\otimes_A B,B)=\Mcal_A^d(R,B)\]
(the equality $\Mcal_B^d(R\otimes_A B)=\Mcal_A^d(R,B)$ is proved in
\cite[3.4]{Chenevier2}).

\end{defi}

If $R=A[\Gamma]$ with $\Gamma$ a group, we also write $\Det_{\Gamma,d}$ instead
of $\Det_{R,d}$.

\begin{rmk}
If $D_1\in\Det_{R,d_1}$ and $D_2\in\Det_{R,d_2}$, we can define
$D_1\times D_2\in\Det_{R,d_1+d_2}$ as follows. For every commutative $A$-algebra
$B$ and every $x\in R\otimes_A B$, take 
\[(D_1\times D_2)_B(x)=D_{1,B}(x)D_{2,B}(x).\]
If $\Gamma$ is a group, $\rho_1:\Gamma\ra\GL_{d_1}(A)$, $\rho_2:\Gamma\ra
\GL_{d_2}(A)$ are representations and
$D_1=D_{\rho_1}$, $D_2=D_{\rho_2}$, then $D_1\times D_2=D_{\rho_1\oplus\rho_2}$.

\end{rmk}

\begin{ex}
Let $\Gamma$ be a group, and let $X$ be the underlying set of $\Gamma$.
Then the canonical surjective $A$-algebra morphism $A\{X\}\ra A[\Gamma]$
gives rise to an injective morphism of functors
$\Det_{\Gamma,d}\ra\Det_{A\{X\},d}$.

\end{ex}

\begin{thm}[Roby, see \cite{Roby} III.1 and \cite{Chenevier2} 1.6]
Let $R$ be an $A$-algebra and $d\in\Nat$. Then the functor $\Det_{R,d}$
is representable by the $A$-algebra $(\Gamma^d_A(R))^\ab$, where
$\Gamma^d_A(R)$ is the $A$-algebra of divided powers of order $d$ relative
to $A$ and $(-)^\ab$ denotes the abelianization.

\end{thm}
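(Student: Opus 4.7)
The plan is to derive representability by combining two successive universal properties of the divided power construction. First, as $A$-modules, the divided power module $\Gamma^d_A(R)$ corepresents the functor sending an $A$-module $N$ to the set of polynomial laws $R\to N$ homogeneous of degree $d$: there is a universal degree-$d$ polynomial law $\gamma^d:R\to\Gamma^d_A(R)$ through which any such polynomial law factors uniquely by an $A$-linear map. This is Roby's original theorem (\cite{Roby} I.8), and would be my starting point.

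Next, I would upgrade $\Gamma^d_A(R)$ to an associative unital $A$-algebra for which $\gamma^d$ is multiplicative. The product is obtained by applying the universal property to the polynomial law $R\times R\to\Gamma^d_A(R)$, $(x,y)\mapsto\gamma^d(xy)$: this law is homogeneous of degree $d$ separately in $x$ and in $y$, and by a K\"unneth-type identification the tensor product $\Gamma^d_A(R)\otimes_A\Gamma^d_A(R)$ corepresents polynomial laws on $R\times R$ of such bidegree, producing an $A$-linear multiplication $\Gamma^d_A(R)\otimes_A\Gamma^d_A(R)\to\Gamma^d_A(R)$. Associativity and the fact that $\gamma^d(1)$ is a two-sided unit are verified by the usual universal-property yoga, reducing both identities to the corresponding identities in $R$ and using the uniqueness clause of the universal property. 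With this $A$-algebra structure in place, multiplicative degree-$d$ polynomial laws $R\to S$ valued in an associative unital $A$-algebra $S$ correspond bijectively to $A$-algebra homomorphisms $\Gamma^d_A(R)\to S$: one direction is immediate since $\gamma^d$ is itself multiplicative, and the other direction uses the construction of the product.

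Finally, specializing $S=B$ commutative, an $A$-algebra map $\Gamma^d_A(R)\to B$ factors uniquely through the abelianization $(\Gamma^d_A(R))^\ab$. Assembling the three bijections yields, functorially in $B\in\Cf_A$,
\[\Det_{R,d}(B)=\Mcal^d_A(R,B)\;\simeq\;\Hom_{A\text{-alg}}(\Gamma^d_A(R),B)\;\simeq\;\Hom_{\Cf_A}((\Gamma^d_A(R))^\ab,B),\]
which is the asserted representability.

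The principal obstacle is the construction of the multiplication in the second step. Naively one would wish to set $\gamma^d(x)\cdot\gamma^d(y)=\gamma^d(xy)$, but the $A$-submodule spanned by the image of $\gamma^d$ is typically strictly smaller than $\Gamma^d_A(R)$ (already for $R=A[x]$ when $A$ is a field of characteristic $p$ dividing $d$, where the divided powers $\gamma^d(x)=x^{[d]}$ are not $d!$-divisible by $x^d$), so the product must be defined indirectly via the universal property applied to a polynomial law on $R\times R$, and then shown to be associative. Once this is established the remaining identifications are formal, and the abelianization step merely converts the (possibly noncommutative) representing algebra into one living in $\Cf_A$.
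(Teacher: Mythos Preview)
The paper does not actually supply a proof of this statement: it is recorded as a result of Roby (with a pointer to \cite{Roby} III.1 and \cite{Chenevier2} 1.6), and the sentence following it merely directs the reader to Roby's papers for the definition of $\Gamma^d_A(M)$ and of the algebra structure on $\Gamma^d_A(R)$. So there is no ``paper's own proof'' to compare against; your sketch is essentially a reconstruction of the argument in the references the paper cites.

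Your outline is correct and follows the standard route. The three steps --- Roby's universal property of $\Gamma^d_A(R)$ for degree-$d$ homogeneous polynomial laws, the upgrade to an associative unital $A$-algebra via the bihomogeneous universal property applied to $(x,y)\mapsto\gamma^d(xy)$, and the final passage to the abelianization when the target is commutative --- are exactly what the cited sources do. Your identification of the nontrivial point (that the image of $\gamma^d$ does not in general span $\Gamma^d_A(R)$, so the multiplication has to be defined via the bidegree-$(d,d)$ universal property rather than by a naive formula on generators) is accurate, and the K\"unneth-type statement you invoke, that polynomial laws $R\times R\to N$ bihomogeneous of bidegree $(d,d)$ correspond to $A$-linear maps $\Gamma^d_A(R)\otimes_A\Gamma^d_A(R)\to N$, is indeed a theorem of Roby. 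One small quibble: the citation \cite{Roby} I.8 is the section where homogeneous polynomial laws are \emph{defined}; the universal property you need is established later (around III--IV of \cite{Roby}, and the algebra structure in \cite{Roby2} II--III).
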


See Roby's papers (\cite{Roby} III.1 and \cite{Roby2} II) for the
definition of $\Gamma^d_A(M)$ if $M$ is an $A$-module, and the $A$-algebra
structure on $\Gamma^d_A(R)$ if $R$ is an $A$-algebra.

We will also write $\Det_{R,d}$ for the affine
scheme representing the functor $\Det_{R,d}$.

\begin{rmk}
The scheme $\Det_{R,d}$ is a contravariant functor of the $A$-algebra
$R$. Indeed, if
$u:R\ra S$ is a morphism of $R$-algebras, then,
for every commutative $A$-algebra $B$ and every $D\in\Det_{S,d}(B)=
\Mcal_A^d(S,B)$, the maps $u^*(D)_C:R\otimes_A C\ra B\otimes_A C$,
$x\mapsto D((u\otimes\id_C)(x))$, for $C$ an object of $\Cf_A$,
define an element of $\Mcal_A^d(R,B)=\Det_{R,d}(B)$.

In particular, the scheme $\Det_{A\{X\},d}$ (resp. $\Det_{\Gamma,d}$)
is a contravariant functor of the set $X$ (resp. the group $\Gamma$).

\end{rmk}

\begin{rmk}\label{rmk_BC}
For every commutative $A$-algebra $B$, we have a natural morphism
of schemes $\Det_{R\otimes_A B,d}\ra\Det_{R,d}\times_{\Spec A} \Spec B$. This is an
isomorphism by Theorem III.3 on page 262 of~\cite{Roby}.

\end{rmk}

Let $X$ be a set.
We recall Vaccarino's construction of the universal ring of
$\Det(\Z\{X\},d)$, following Chenevier's presentation in
\cite[1.15]{Chenevier2}. Let $F_X(d)=\Z[x_{i,j},\ x\in X, 1\leq i,j\leq d]$
be the ring of polynomials on the variables $x_{i,j}$ for $x\in X$ and
$i,j\in\{1,2,\ldots,d\}$. The \emph{generic matrices representation} is the
ring morphism
\[\rho^\univ:\Z\{X\}\ra M_d(F_X(d))\]
sending each $x\in X$ to the matrix $(x_{ij})_{1\leq i,j\leq d}$. This
defines a degree $d$ homogeneous multiplicative polynomial law
$D_{\rho^\univ}:\Z\{X\}\ra E_X(d)$, where
$E_X(d)$ is the subring of $F_X(d)$ generated by the coefficients of
the characteristic polynomials of the $\rho^\univ(p)$, $p\in\Z\{X\}$.

\begin{thm}[Vaccarino, see Theorem~1.15 of~\cite{Chenevier2}] 
The ring
$E_X(d)$ and $D_{\rho^\univ}\in\Det_{\Z\{X\},d}(E_X(d))$ represent the functor
$\Det_{\Z\{X\},d}$.

\end{thm}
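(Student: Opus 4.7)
By Roby's theorem recalled above, $\Det_{\Z\{X\},d}$ is already known to be represented by $R^{\univ} := (\Gamma^d_\Z(\Z\{X\}))^{\ab}$ via a universal element $D^{\univ}$. Hence $D_{\rho^{\univ}}\in\Det_{\Z\{X\},d}(E_X(d))$ is classified by a unique $\Z$-algebra morphism $\alpha: R^{\univ} \to E_X(d)$, and the theorem is equivalent to showing that $\alpha$ is an isomorphism.

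\textbf{Surjectivity of $\alpha$.} Write $\Lambda_i^{\univ}(p)\in R^{\univ}$ for the image of $p\in\Z\{X\}$ under the $i$-th coefficient polynomial law of the universal determinant $D^{\univ}$. By naturality of the $\Lambda_i$, we have $\alpha(\Lambda_i^{\univ}(p))=\Lambda_{i,E_X(d)}(p)$ computed with respect to $D_{\rho^{\univ}}$, which by Example~\ref{ex_Lambda} is exactly the coefficient of $T^{d-i}$ in the characteristic polynomial of the matrix $\rho^{\univ}(p)\in M_d(F_X(d))$. By the very definition of $E_X(d)\subseteq F_X(d)$, these coefficients, as $p$ and $i$ vary, generate $E_X(d)$ as a $\Z$-algebra, so $\alpha$ is surjective.

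\textbf{Injectivity of $\alpha$.} This is the main obstacle. The approach is to construct a left inverse $\beta:E_X(d)\to R^{\univ}$ of $\alpha$, sending each characteristic polynomial coefficient of $\rho^{\univ}(p)$ to the corresponding $\Lambda_{i}^{\univ}(p)\in R^{\univ}$. Once $\beta$ is well-defined, the relation $\beta\circ\alpha=\id_{R^{\univ}}$ will follow from the surjectivity computation, provided the $\Lambda_i^{\univ}(p)$ generate $R^{\univ}$ as a $\Z$-algebra. The existence of $\beta$ amounts to a compatibility statement: every polynomial identity holding in $F_X(d)$ between the characteristic polynomial coefficients of the matrices $\rho^{\univ}(p)$ must continue to hold in $R^{\univ}$ between the corresponding divided-power symbols $\Lambda_{i}^{\univ}(p)$.

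The real work, and the main technical difficulty, is this compatibility. Concretely, the plan is to use Roby's explicit description of $\Gamma^d_\Z$ via symmetric tensors and divided powers to extract both a generating set for $R^{\univ}$ and a complete list of its defining relations (multiplicativity and homogeneity of $D^{\univ}$, a Cayley--Hamilton-type identity, Newton-type relations among the $\Lambda_i$, and the universal Amitsur multilinear formulas), and then to verify one by one that each such relation continues to hold in $F_X(d)$ between the corresponding matrix invariants. This is essentially an integral form of the first fundamental theorem of matrix invariants (Procesi, Razmyslov), extended over $\Z$ uniformly in $X$ by Vaccarino.
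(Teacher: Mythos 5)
This theorem is not proved in the paper at all: it is quoted as an external result of Vaccarino, via Theorem~1.15 of Chenevier's paper, so there is no in-text proof to compare against. Judged on its own terms, your proposal correctly sets up the problem (by Roby's theorem it suffices to show that the classifying map $\alpha:(\Gamma^d_\Z(\Z\{X\}))^{\ab}\to E_X(d)$ of $D_{\rho^{\univ}}$ is an isomorphism), and your surjectivity argument is sound: the images $\alpha(\Lambda_i^{\univ}(p))$ are exactly the characteristic-polynomial coefficients of the $\rho^{\univ}(p)$, which generate $E_X(d)$ by definition.

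The injectivity step, however, is a genuine gap, and it is precisely where all of the content of Vaccarino's theorem lives. You do not prove it; you announce a plan, and the plan as stated does not do what you need. To define a left inverse $\beta:E_X(d)\to R^{\univ}$ on generators you must show that \emph{every relation holding in $E_X(d)\subset F_X(d)$ among the matrix invariants} lifts to the corresponding relation among the $\Lambda_i^{\univ}(p)$ in $R^{\univ}$; this requires a presentation of $E_X(d)$ by generators and relations, i.e.\ an integral second fundamental theorem of matrix invariants. Your concrete plan instead proposes to list the defining relations of $R^{\univ}$ and check them in $F_X(d)$ — that is the direction needed to construct $\alpha$, which already exists for free from Roby's universal property, so the argument is circular for the purpose of building $\beta$. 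Moreover the tools you invoke (Procesi–Razmyslov) are characteristic-zero trace-identity results and do not supply the needed integral presentation; avoiding any such presentation is exactly the point of Vaccarino's argument (which instead realizes $\Gamma^d_\Z(\Z\{X\})^{\ab}$ inside symmetric tensors and matches it with $\Of(M_d^X)^{\GL_d}$ using Donkin's integral invariant theory). Finally, your proviso that the $\Lambda_i^{\univ}(p)$ generate $R^{\univ}$ as a $\Z$-algebra is itself a nontrivial theorem (it is part of the Roby--Ziplies--Vaccarino circle of results) and is left unestablished. As written, the proposal proves surjectivity of $\alpha$ and defers the essential half of the theorem.
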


Using results of Donkin, we deduce the following corollary.

\begin{coro}\label{cor_DetX}
The morphism $M_d^X\ra\Det_{\Z\{X\},d}$ sending $\rho$ to $D_\rho$
(see Example~\ref{ex_det}(1)) induces an isomorphism
$\Spec(\Of(M_d^X)^{\GL_{d}})\iso\Det_{\Z\{X\},d}$.

\end{coro}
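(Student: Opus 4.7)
The plan is to combine Vaccarino's theorem, which identifies $\Det_{\Z\{X\},d}$ with $\Spec E_X(d)$, with Donkin's description of $\Of(M_d^X)^{\GL_d}$, and to check that the two rings, both naturally sitting inside $F_X(d) = \Of(M_d^X)$, coincide.

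First I would observe that the morphism $M_d^X \to \Det_{\Z\{X\},d}$ of Example~\ref{ex_det}(1) is, on coordinate rings, exactly the map $E_X(d) \to F_X(d)$ classifying the universal determinant $D_{\rho^\univ}$. Indeed, by Vaccarino's theorem the $B$-points of $\Det_{\Z\{X\},d}$ correspond to ring maps $E_X(d) \to B$, and chasing through the construction shows that a map $\rho : X \to M_d(B)$, viewed as a $B$-point of $M_d^X$, is sent to $D_\rho$ via the pullback of the inclusion $E_X(d) \hookrightarrow F_X(d)$ along the $B$-point $F_X(d) \to B$ corresponding to $\rho$. Next, since $D_\rho = D_{g\rho g^{-1}}$ for every $g \in \GL_d(B)$ (the ordinary determinant on $M_d(B)$ is conjugation-invariant), the morphism $M_d^X \to \Det_{\Z\{X\},d}$ is $\GL_d$-invariant, so it factors through $\Spec(\Of(M_d^X)^{\GL_d})$; equivalently, the inclusion $E_X(d) \hookrightarrow F_X(d)$ lands inside $F_X(d)^{\GL_d}$.

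It then remains to prove the reverse inclusion $F_X(d)^{\GL_d} \subseteq E_X(d)$. This is exactly the content of Donkin's theorem on invariants over $\Z$: the ring $\Of(M_d^X)^{\GL_d}$, for the action of $\GL_d$ by simultaneous conjugation, is generated as a $\Z$-algebra by the coefficients of the characteristic polynomials of arbitrary monomials in the generic matrices $(x_{i,j})_{x \in X}$. Since such a monomial is nothing other than $\rho^\univ(p)$ for some $p \in \Z\{X\}$, these generators all lie in $E_X(d)$ by definition; hence $F_X(d)^{\GL_d} \subseteq E_X(d)$, and we conclude that $E_X(d) = \Of(M_d^X)^{\GL_d}$.

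Combining this equality with Vaccarino's theorem gives the desired isomorphism $\Spec(\Of(M_d^X)^{\GL_d}) \iso \Det_{\Z\{X\},d}$, compatible with $\rho \mapsto D_\rho$. The substantive input is Donkin's theorem; everything else is a matter of unwinding the definition of the universal determinant. The subtle point to be careful about is that Donkin's description of invariants is valid over $\Z$ (and hence over any base), not just in characteristic zero, which is essential since the whole point of Chenevier's determinants is to allow arbitrary coefficient rings.
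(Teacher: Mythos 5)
Your proposal is correct and follows the same route as the paper: identify the morphism on coordinate rings with the universal determinant $\Z\{X\}\to E_X(d)\subset F_X(d)$, invoke Donkin's theorem to get $E_X(d)=\Of(M_d^X)^{\GL_d}$, and conclude by Vaccarino's theorem. You merely spell out the two inclusions (conjugation-invariance of $\det$ for one direction, Donkin's generators for the other) a bit more explicitly than the paper does.
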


\begin{proof}
Note that $M_d^X=\Spec(F_X(d))$, and that the morphism $M_d^X\ra\Det_{\Z\{X\},d}$
of the statement corresponds to the multiplicative polynomial law
$\Z\{X\}\flnom{D_{\rho^\univ}}E_X(d)\subset F_X(d)$. On the other hand,
by the results of Donkin on the generators of $\Of(M_d^X)^{\GL_{d}}$
(see \cite[3.1]{Donkin}), we have $E_X(d)=\Of(M_d^X)^{\GL_{d}}$. The
corollary follows.

\end{proof}

\section{Lafforgue's definition of pseudocharacters}
\label{LPC}

Let $A$ be a commutative unital ring and $G$ be an affine group scheme
over $A$.

As in Section~\ref{Det}, we denote by $\Cf_A$ the category
of commutative unital $A$-algebras. If $B$ is a commutative $A$-algebra and
$X$ is a set, we write $\Cf(X,B)$ for the $B$-algebra of functions $X\ra B$.

For every set $X$ (resp. integer $n\in\Nat$), we make $G$ act on the scheme
$G^X$ (resp. $G^n$) by diagonal conjugation, and we denote by $\Of(G^X)^G$
(resp. $\Of(G^n)^G$) the $A$-algebra of
$G$-invariant regular functions on $G^X$ (resp. $G^n$).

\begin{defi}[See \cite{Lafforgue1} Proposition~11.7 and \cite{BHKT}
Definition~4.1]
\label{def_LPC}
Let $X$ be a set and $B$ be an object of $\Cf_A$. A 
\emph{$B$-valued
$G$-pseudocharacter for the set $X$} is a family of
$A$-algebra morphisms $\Of(G^n)^G\ra\Cf(X,B)$, for $n\geq 1$, satisfying
the following condition:
\begin{itemize}
\item[\setword{(LPC1)}{LPC1}]
For all $n,m\geq 1$, every map $\zeta:\{1,2,\ldots,m\}\ra\{1,2,\ldots,n\}$
and every $f\in\Of(G^m)^G$, if we define $f^\zeta\in\Of(G^n)^G$ by
$f^\zeta(g_1,\ldots,g_n)=f(g_{\zeta(1)},\ldots,g_{\zeta(m)})$, then we have
\[\Theta_m(f)(x_{\zeta(1)},\ldots,x_{\zeta(m)})=\Theta_n(f^\zeta)(x_1,\ldots,x_n)\]
for all $x_1,\ldots,x_n\in X$.

\end{itemize}

Suppose that $X$ is the underlying set of a group $\Gamma$.
Then a \emph{$B$-valued
$G$-pseudocharacter for the group $\Gamma$} is 
a $B$-valued
$G$-pseudocharacter for the set $X$ satisfying the following
additional condition:
\begin{itemize}
\item[\setword{(LPC2)}{LPC2}] 
For all $n\geq 1$ and $f\in\Of(G^n)^G$, if we define $\widehat{f}\in
\Of(G^{n+1})^G$ by $\widehat{f}(g_1,\ldots,g_{n+1})=f(g_1,\ldots,g_{n-1},
g_ng_{n+1})$, then we have
\[\Theta_{n+1}(\widehat{f})(\gamma_1,\ldots,\gamma_{n+1})=
\Theta_n(f)(\gamma_1,\ldots,\gamma_{n-1},\gamma_n\gamma_{n+1})\]
for all $\gamma_1,\ldots,\gamma_{n+1}\in\Gamma$.

\end{itemize} 

\end{defi}

We denote by $\LPC^1_{X,G}$ (resp. $\LPC_{\Gamma,G}$) the functor
$\Cf_A\ra\Set$ sending $B$ to the set of $B$-valued $G$-pseudocharacters 
for the set $X$ (resp. for the group $\Gamma$).
If $X$ is the underlying set of a group $\Gamma$, then
$\LPC_{\Gamma,G}$ is a subfunctor of $\LPC^1_{X,G}$.

\begin{ex}\label{ex_LPC}
\begin{itemize}
\item[(1)] If $X$ is a set and $\rho:X\ra G(A)$ is a map, then we define
an $A$-valued $G$-pseudocharacter $\Theta^1_\rho$ for the set $X$ by
\[\Theta^1_{\rho,n}(f)(x_1,\ldots,x_n)=f(\rho(x_1),\ldots,\rho(x_n)),\]
for every $n\geq 1$, every $f\in\Of(G^n)^G$ and all $x_1,\ldots,x_n\in X$.
Note that $\Theta^1_\rho$ only depends on the conjugacy class of $\rho$.

\item[(2)] Similarly,
if $\Gamma$ is a group and $\rho:\Gamma\ra G(A)$ is a morphism of groups,
then we define an $A$-valued $G$-pseudocharacter $\Theta_\rho$ 
for the group $\Gamma$ by
\[\Theta_{\rho,n}(f)(\gamma_1,\ldots,\gamma_n)=f(\rho(\gamma_1),\ldots,
\rho(\gamma_n)),\]
for every $n\geq 1$, every $f\in\Of(G^n)^G$ and all $\gamma_1,\ldots,\gamma_n
\in\Gamma$.
Again, $\Theta_\rho$ only depends on the conjugacy class of $\rho$.

\end{itemize}
\end{ex}

\begin{rmk}
The functor $\LPC^1_{X,G}$ depends contravariantly on the set $X$. Indeed, if
$u:X\ra Y$ is a map, then, for every commutative $A$-algebra $B$ and every
$\Theta=(\Theta_n)_{n\geq 1}\in\LPC^1_{Y,G}(B)$, the family
$u^*(\Theta)=(u^*(\Theta)_n)_{n\geq 1}$ defined by
\[u^*(\Theta)_n(f)(x_1,\ldots,x_n)=\Theta_n(f)(u(x_1),\ldots,u(x_n))\]
for $n\geq 1$, $f\in\Of(G^n)^G$ and $x_1,\ldots,x_n\in X$ is a
$B$-valued $G$-pseudocharacter for the set $X$.

Similary, the functor $\LPC_{\Gamma,G}$ depends contravariantly on
the group $\Gamma$.

\end{rmk}

If $X$ is a set, $x_1,\ldots,x_n\in X$ and $f\in\Of(G^n)$, we define a
regular function $f_{x_1,\ldots,x_n}\in\Of(G^X)$ by
\[f_{x_1,\ldots,x_n}((g_x)_{x\in X})=f(g_{x_1},\ldots,g_{x_n}).\]
Note that $f\in\Of(G^n)^G$ if and only if $f_{x_1,\ldots,x_n}\in\Of(G^X)^G$.

\begin{prop}\label{prop_rep_LPC1}
Let $X$ be a set and let $R^1_{X,G}=\Of(G^X)^G$. Consider the element
$\Theta^\univ$ of $\LPC^1_{X,G}(R^1_{X,G})$ defined by
\[\Theta^\univ_n(f)(x_1,\ldots,x_n)=f_{x_1,\ldots,x_n}\in R^1_{X,G},\]
for every $n\geq 1$, every $f\in\Of(G^n)^G$ and all $x_1,\ldots,x_n\in X$.
Then $R^1_{X,G}$ represents the functor $\LPC^1_{X,G}$, and $\Theta^\univ\in\LPC^1_{X,G}
(R^1_{X,G})$ is the universal element.

\end{prop}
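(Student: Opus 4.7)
The plan is to verify that $\Theta^\univ$ is a $G$-pseudocharacter for $X$ and then to show that, for every object $B$ of $\Cf_A$, the map
\[\Hom_{\Cf_A}(R^1_{X,G},B) \fl \LPC^1_{X,G}(B),\qquad \varphi \mapsto (\varphi\circ\Theta^\univ_n)_{n\geq 1},\]
is a bijection natural in $B$. That $\Theta^\univ$ satisfies LPC1 is immediate: for $\zeta:\{1,\ldots,m\}\fl\{1,\ldots,n\}$, $f\in\Of(G^m)^G$ and $x_1,\ldots,x_n\in X$, both sides of the LPC1 identity unwind to the same regular function on $G^X$, namely $(g_x)_{x\in X}\mapsto f(g_{x_{\zeta(1)}},\ldots,g_{x_{\zeta(m)}})$. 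The same unwinding also shows that $\Theta^\univ_n(f)(x_1,\ldots,x_n)$ is $G$-invariant, hence lies in $R^1_{X,G}$.

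The structural input I would use is that $G^X=\varprojlim_S G^S$ along the projections $G^{S'}\fl G^S$ for finite $S\subset S'\subset X$; each such projection admits a set-theoretic section, so the pullback $\pi_S^*:\Of(G^S)\fl\Of(G^X)$ is injective, and the $G$-action restricts under any such section to the diagonal action on $G^S$, so $G$-invariance descends along $\pi_S^*$. Consequently
\[R^1_{X,G}=\Of(G^X)^G=\bigcup_{S\textup{ finite}}\pi_S^*\,\Of(G^S)^G,\]
and every element $h\in R^1_{X,G}$ has the form $f_{x_1,\ldots,x_n}$ for some $n\geq 1$, some pairwise distinct $x_1,\ldots,x_n\in X$, and some $f\in\Of(G^n)^G$.

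To construct the inverse of the representing map, given $\Theta\in\LPC^1_{X,G}(B)$ I would define $\varphi:R^1_{X,G}\fl B$ by
\[\varphi(f_{x_1,\ldots,x_n})=\Theta_n(f)(x_1,\ldots,x_n).\]
Well-definedness is the main check: if $f_{x_1,\ldots,x_n}=f'_{x'_1,\ldots,x'_m}$, I would enumerate the distinct entries of the concatenated tuple as $(y_1,\ldots,y_N)$ and let $\zeta$ and $\zeta'$ be the resulting inclusions into $\{1,\ldots,N\}$. Injectivity of $\pi_Y^*$ then upgrades the equality to $f^\zeta=(f')^{\zeta'}$ in $\Of(G^N)^G$, and applying LPC1 to each side collapses both to the common value $\Theta_N(f^\zeta)(y_1,\ldots,y_N)$. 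Verifying that $\varphi$ is a unital $A$-algebra morphism uses the same device: to check multiplicativity on $h_1=f_{x_1,\ldots,x_n}$ and $h_2=g_{y_1,\ldots,y_m}$, I would rewrite both on the common tuple $(x_1,\ldots,x_n,y_1,\ldots,y_m)$ via LPC1, then invoke that each $\Theta_{n+m}$ is itself a ring homomorphism; unitality and $A$-linearity are analogous.

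The two constructions are mutually inverse: pulling $\Theta^\univ$ back along the $\varphi$ just constructed recovers $\Theta$ by the defining formula, while any morphism out of $R^1_{X,G}$ is determined by its values on the generators $f_{x_1,\ldots,x_n}$; naturality in $B$ is evident. I expect the principal obstacle to be the well-definedness of $\varphi$: the same element of $R^1_{X,G}$ admits many presentations $f_{x_1,\ldots,x_n}$, and it is only the full strength of LPC1, together with injectivity of pullback along the projections $G^X\fl G^N$, that reconciles them.
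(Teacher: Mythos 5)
Your proof is correct and follows essentially the same route as the paper's: both rest on the observation that $\Of(G^X)^G$ is the filtered union of the $\Of(G^Y)^G$ over finite $Y\subset X$, so that every invariant function is of the form $f_{x_1,\ldots,x_n}$, and both use condition \ref{LPC1} to show that the inverse map $f_{x_1,\ldots,x_n}\mapsto\Theta_n(f)(x_1,\ldots,x_n)$ is well defined. The only (cosmetic) difference is that the paper organizes the inverse as a compatible family of algebra morphisms $w_Y:\Of(G^Y)^G\ra B$ indexed by finite subsets $Y$, which makes the algebra-morphism property automatic, whereas you define it directly on elements and verify multiplicativity by passing to a common tuple.
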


\begin{proof}
The morphism of functors $\Spec(R^1_{X,G})\ra\LPC^1_{X,G}$ corresponding
to $\Theta^\univ$ sends a morphism of rings
$u:R^1_{X,G}\ra B$ to the $G$-pseudocharacter $\Theta_u$ defined by
\[\Theta_{u,n}(f)(x_1,\ldots,x_n)=u(\Theta^\univ_n(f)(x_1,\ldots,x_n)),\]
for $n\geq 1$, $f\in\Of(G^n)^G$ and $x_1,\ldots,x_n\in X$.
We check that it is an isomorphism. 
The central point is that, by definition of the product $G^X$, we have
$\Of(G^X)=\varinjlim_{Y\subset X\ \mathrm{finite}}\Of(G^Y)$; in particular, every
element of $\Of(G^X)^G$ is of the form $f_{x_1,\ldots,x_n}$, for some $n\geq 1$,
$x_1,\ldots,x_n\in X$ and $f\in\Of(G^n)^G$.

Let $B$ be a commutative $A$-algebra. Let $u,v:R^1_{X,G}\ra B$ be two morphisms of
$A$-algebras such that $\Theta_u=\Theta_v$. Let $h\in R^1_{X,G}$, and choose
an integer $n\geq 1$, $x_1,\ldots,x_n\in X$ and $f\in\Of(G^n)^G$ such that
$h=f_{x_1,\ldots,x_n}$. Then
\[u(h)=\Theta_{u,n}(f)(x_1,\ldots,x_n)=\Theta_{v,n}(f)(x_1,\ldots,x_n)=v(h).\]
This proves that $u=v$. Now let $\Theta\in\LPC^1_{X,G}(B)$; we want to find
a morphism of $A$-algebras $w:R^1_{X,G}\ra B$ such that $\Theta=\Theta_w$.
Let $Y$ be a finite subset of $X$. If $\zeta:Y\iso\{1,2,\ldots,n\}$ is
a bijection, then we get an isomorphism of $A$-algebras
$\Of(G^Y)^G\iso\Of(G^n)^G$ sending $h\in\Of(G^Y)^G$ to the regular function
$h^\zeta:(g_1,\ldots,g_n)\mapsto h((g_{\zeta(y)})_{y\in Y})$, and we define
$w_Y:\Of(G^Y)^G\ra B$ by
\[w_Y(h)=\Theta_n(h^\zeta)(\zeta^{-1}(1),\ldots,\zeta^{-1}(n)).\]
By condition \ref{LPC1}, this does not depend on the choice of $\zeta$ and,
if $Y\subset Z$ are finite subsets of $X$, then
$w_{Z\mid\Of(G^Y)^G}=w_Y$. So the family $(w_Y)_{Y\subset X\ \mathrm{finite}}$
defines a morphism of $A$-algebras $w:R^1_{X,G}\ra B$, and it follows immediately
from the definition of $\Theta^\univ$ that we have $\Theta=\Theta_w$.

\end{proof}

Let $\Gamma$ be a group.
For all $\gamma,\delta\in\Gamma$, we define a $G$-equivariant morphism
of $A$-modules $\varphi_{\gamma,\delta}:\Of(G\times G^\Gamma)\ra\Of(G^\Gamma)$
by
\[\varphi_{\gamma,\delta}(f)((g_\alpha)_{\alpha\in\Gamma})=f(g_{\gamma\delta},
(g_\alpha)_{\alpha\in\Gamma})-f(g_\gamma g_\delta,(g_\alpha)_{\alpha\in\Gamma}).\]
We have a morphism of $A$-algebras $\Of(G^\Gamma)\ra\Of(G\times G^\Gamma)$ induced
by the projection of $G\times G^\Gamma$ on its second factor, and this
morphism is equivariant for the action of $G$ by diagonal conjugation.
The map $\varphi_{\gamma,\delta}$ becomes $\Of(G^\Gamma)$-linear for this action
of $\Of(G^\Gamma)$ on $\Of(G\times G^\Gamma)$. In particular,
$\varphi_{\gamma,\delta}(\Of(G\times G^\Gamma))$ (resp.
$\varphi_{\gamma,\delta}(\Of(G\times G^\Gamma)^G)$) is an ideal of
$\Of(G^\Gamma)$ (resp. $\Of(G^\Gamma)^G$).

\begin{prop}
\begin{enumerate}
\item Let $J_{\Gamma,G}\subset\Of(G^\Gamma)$ be the sum of the images
of the $\varphi_{\gamma,\delta}$, for all $\gamma,\delta\in\Gamma$. 
Then $J_{\Gamma,G}$
is an ideal of $\Of(G^\Gamma)$, and $\Spec(\Of(G^\Gamma)/J_{\Gamma,G})
\subset\Spec(\Of(G^\Gamma))=G^\Gamma$
sends any commutative $B$-algebra $A$ to the set of maps
$\rho:\Gamma\ra G(B)$ that are morphisms of groups.

\item Let $I_{\Gamma,G}\subset J_{\Gamma,G}^G$
be the sum of the $\varphi_{\gamma,\delta}(\Of(G\times G^\Gamma)^G)$, 
for all $\gamma,\delta\in\Gamma$. Then $I_{\Gamma,G}$
is an ideal of $R^1_{\Gamma,G}=\Of(G^\Gamma)^G$, and, if we set
$R_{\Gamma,G}=R^1_{\Gamma,G}/I_{\Gamma,G}$, then the isomorphism
$\Spec(R^1_{\Gamma,G})\iso\LPC^1_{\Gamma,G}$ of Proposition~\ref{prop_rep_LPC1}
induces an isomorphism between the closed subscheme $\Spec(R_{\Gamma,G})$ of
$\Spec(R^1_{\Gamma,G})$ and $\LPC_{\Gamma,G} \subset \LPC^1_{\Gamma,G}$.

\end{enumerate}
\end{prop}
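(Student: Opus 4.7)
For part~(i), $\varphi_{\gamma,\delta}$ is $\Of(G^\Gamma)$-linear for the module structure coming from the second projection, so its image is an ideal and a sum of ideals is an ideal. To identify the $B$-points of $\Spec(\Of(G^\Gamma)/J_{\Gamma,G})$, I note that a $B$-point of $G^\Gamma$ is a map $\rho\colon\Gamma\ra G(B)$, and that the value of $\varphi_{\gamma,\delta}(f)$ at $\rho$ is $f(\rho(\gamma\delta),(\rho(\alpha)))-f(\rho(\gamma)\rho(\delta),(\rho(\alpha)))$. This vanishes for every $f\in\Of(G\times G^\Gamma)$ and every $\gamma,\delta\in\Gamma$ if and only if the two $B$-points of the affine scheme $G\times G^\Gamma$ differing only in the first coordinate coincide, i.e.\ if and only if $\rho(\gamma\delta)=\rho(\gamma)\rho(\delta)$ in $G(B)$ for all $\gamma,\delta$; multiplicativity in a group automatically forces $\rho(1)=1$, so we recover exactly the group homomorphisms.

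The ideal claim in part~(ii) follows by the same linearity: the restriction of $\varphi_{\gamma,\delta}$ to $G$-invariants is $\Of(G^\Gamma)^G$-linear, so its image is an ideal of $R^1_{\Gamma,G}=\Of(G^\Gamma)^G$. For the moduli claim, Proposition~\ref{prop_rep_LPC1} translates a morphism $u\colon R^1_{\Gamma,G}\ra B$ into a pseudocharacter $\Theta=\Theta_u\in\LPC^1_{\Gamma,G}(B)$ for the underlying set of $\Gamma$, and I have to show that $u(I_{\Gamma,G})=0$ is equivalent to condition~\ref{LPC2}. The easy direction uses the following specialization: given $n\geq 1$, $f\in\Of(G^n)^G$ and $\gamma_1,\ldots,\gamma_{n+1}\in\Gamma$, the function $F(h,(g_\alpha))=f(g_{\gamma_1},\ldots,g_{\gamma_{n-1}},h)$ lies in $\Of(G\times G^\Gamma)^G$ and satisfies
\[\varphi_{\gamma_n,\gamma_{n+1}}(F)=f_{\gamma_1,\ldots,\gamma_{n-1},\gamma_n\gamma_{n+1}}-\widehat{f}_{\gamma_1,\ldots,\gamma_{n+1}},\]
whose image under $u$ is exactly the difference of the two sides of~\ref{LPC2}; hence $u(I_{\Gamma,G})=0$ implies~\ref{LPC2}.

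For the converse, an arbitrary $F\in\Of(G\times G^\Gamma)^G$ depends on only finitely many of the $G^\Gamma$-coordinates, so it factors as $F(h,(g_\alpha))=\widetilde F(h,g_{\alpha_1},\ldots,g_{\alpha_m})$ for some $\widetilde F\in\Of(G^{m+1})^G$ and some $\alpha_1,\ldots,\alpha_m\in\Gamma$. Expanding $u(\varphi_{\gamma,\delta}(F))$ via Proposition~\ref{prop_rep_LPC1}, I would use~\ref{LPC1} to cycle the distinguished first slot of $\widetilde F$ into the last position, and then apply~\ref{LPC2} to separate $g_\gamma g_\delta$, obtaining a difference that cancels. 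I expect the main obstacle to be exactly this reordering bookkeeping, namely choosing the permutation $\zeta$ so that the hat construction in~\ref{LPC2} applies cleanly to the reordered function; all other steps are routine unwinding of definitions, and the functoriality of the resulting bijection in $B$ is automatic from naturality of the constructions.
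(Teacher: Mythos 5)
Your argument is correct and follows the paper's proof in all essentials: the ideal claims via the $\Of(G^\Gamma)$-linearity (resp.\ $\Of(G^\Gamma)^G$-linearity) of $\varphi_{\gamma,\delta}$, the identification of the $B$-points of $\Spec(\Of(G^\Gamma)/J_{\Gamma,G})$ by testing all regular functions on the two points of $G\times G^\Gamma$, and the two implications between $u(I_{\Gamma,G})=0$ and \ref{LPC2} via exactly the specializations you describe. The only difference is that the paper sidesteps the \ref{LPC1} reordering you anticipate in the converse direction by writing an invariant $f\in\Of(G\times G^\Gamma)^G$ from the start as $f(g,(g_\alpha)_{\alpha\in\Gamma})=h(g_{\gamma_1},\ldots,g_{\gamma_n},g)$ with the distinguished slot already in the last position, so that the hat construction $\widehat{h}$ applies directly.
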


\begin{proof}
We already know that $J_{\Gamma,G}$ and $I_{\Gamma,G}$ are ideals, because
they are sums of ideals.

Let $B$ be a commutative $A$-algebra and $\rho:\Gamma\ra G(B)$ be a map;
this corresponds to a morphism of $A$-algebras $u:\Of(G^\Gamma)\ra B$, and
we have
\[u(f)=f((\rho(\gamma))_{\gamma\in\Gamma})\]
for every $f\in\Of(G^\Gamma)$.
We want to prove that $\rho$ is a morphism of groups if and only if
$u(J_{\Gamma,G})=0$. Suppose that $\rho$ is a morphism of groups. Let
$\gamma,\delta\in\Gamma$ and $f\in\Of(G\times G^\Gamma)$. Then
\begin{align*}
u(\varphi_{\gamma,\delta}(f))=
f(\rho(\gamma\delta),(\rho(\alpha)_{\alpha\in\Gamma}))-
f(\rho(\gamma)\rho(\delta),(\rho(\alpha)_{\alpha\in\Gamma}))=0
\end{align*}
as $\rho$ is a morphism.
This shows that $J_{\Gamma,G}\subset\Ker u$. Conversely, suppose that
$J_{\Gamma,G}\subset\Ker u$. Let $\gamma,\delta\in\Gamma$. Then, for every
$f\in\Of(G)$, we have
\[0=u(\varphi_{1,(\gamma,\delta)}(f))=f(\rho(\gamma\delta))-
f(\rho(\gamma)\rho(\delta)),\]
hence $\rho(\gamma\delta)=\rho(\gamma)\rho(\delta)$.
This finishes the proof of (i).

We now prove the second statement of (ii). 
Let $B$ be a commutative $A$-algebra, let
$\Theta\in\LPC^1_{\Gamma,G}(B)$, and let $u:R^1_{\Gamma,G}\ra B$ be the morphism
of $A$-algebras corresponding to $\Theta$. 
Suppose that $\Theta$ satisfies condition \ref{LPC2}. Let $\gamma,\delta\in
\Gamma$ and $f\in\Of(G\times G^\Gamma)^G$. Choose a finite subset
$\{\gamma_1,\ldots,\gamma_n\}$ of $\Gamma$ and $h\in\Of(G^{n+1})^G$ such
that
\[f(g,(g_\alpha)_{\alpha\in\Gamma})=h(g_{\gamma_1},\ldots,g_{\gamma_n},g).\]
Then we have
\[f(g_\gamma g_\delta,(g_\alpha)_{\alpha\in\Gamma})=\widehat{h}((g_\alpha)_{\alpha\in
\Gamma},g_\gamma,g_\delta),\]
so
\begin{align*}
u(\varphi_{\gamma,\delta}(f))  =
\Theta_{n+1}(h)(\gamma_1,\ldots,\gamma_n,\gamma\delta)-
\Theta_{n+2}(\widehat{h})(\gamma_1,\ldots,\gamma_n,\gamma,\delta)=0.
\end{align*}
This proves that $I_{\Gamma,G}\subset\Ker u$.

Conversely, suppose that $I_{\Gamma,G}\subset\Ker u$.
Let $n\geq 1$, $h\in\Of(G^n)^G$
and $\gamma_1,\ldots,\gamma_{n+1}\in\Gamma$. 
Define $f\in\Of(G\times G^\Gamma)^G$ by
\[f(g,(g_\alpha)_{\alpha\in\Gamma})=h(g_{\gamma_1},\ldots,g_{\gamma_{n-1}},g).\]
Then
\[0=u(\varphi_{\gamma_n,\gamma_{n+1}}(f))=
\Theta_n(h)(\gamma_1,\ldots,\gamma_{n-1},\gamma_n\gamma_{n+1})-
\Theta_n(\widehat{h})(\gamma_1,\ldots,\gamma_{n-1},\gamma_n,\gamma_{n+1}).\]
This implies that 
$\Theta$ satisfies condition \ref{LPC2}.

\end{proof}

Next we discuss the behavior of the functors $\LPC^1_{X,G}$ and
$\LPC_{\Gamma,G}$ under change of the base ring $A$.

We will use the notion of \emph{adequate homeomorphism} defined by
Alper (see \cite[Definition~3.3.1]{Alper}): a morphism of schemes
is an adequate homeomorphism if it is integral, a universal homeomorphism and
a local isomorphism at all points whose residue field is of characteristic
$0$. We will also use Alper's notion of \emph{geometrically reductive group
schemes}, see Definition~9.1.1 of~\cite{Alper}), which generalizes
that of reductive group schemes. In particular, if $G$ is an affine
smooth algebraic group over a field $k$, then it is geometrically
reductive if and only if it is reductive (Lemma~9.2.8 of~\cite{Alper}), and,
if $G\ra S$ is a smooth group scheme with connected fibers, then it
is a geometrically reductive group scheme if and only if it is reductive
(Theorem~9.7.5 of~\cite{Alper}).

Fix a commutative ring $A$ and a flat affine group scheme $G$ over $A$.
Let $B$ be a commutative $A$-algebra. For every $A$-module $V$ with an action
of $G$, we have $V^G\otimes_A B\subset (V\otimes_A B)^{G_B}$. As
$\Of(G^X)\otimes_A B=\Of(G_B^X)$ for every set $X$,
we deduce that $J_{\Gamma,G_B}=J_{\Gamma,G}\otimes_A B$ and
$I_{\Gamma,G_B}\supset I_{\Gamma,G}\otimes_A B$.
So we get morphisms of $B$-algebras, for $X$ a set and $\Gamma$ a group,
\[R^1_{X,G}\otimes_A B=\Of(G^X)^G\otimes_A B\ra\Of(G_B^X)^{G_B}=R^1_{X,G_B}\]
and
\[R_{\Gamma,G}\otimes_A B=
(\Of(G^\Gamma)^G\otimes_A B)/(I_{\Gamma,G}\otimes_A B)
\ra R_{\Gamma,G_B}\]
and corresponding morphisms of schemes
\[\beta_X:\LPC^1_{X,G_B}\ra\LPC^1_{X,G}\otimes_A B\]
and
\[\beta_\Gamma:\LPC_{\Gamma,G_B}\ra\LPC_{\Gamma,G}\otimes_A B.\]

\begin{prop}\label{prop_BC}
\begin{enumerate}
\item The morphisms $\beta_X$ and $\beta_\Gamma$ are isomorphisms 
in the following cases:
\begin{itemize}
\item[(a)] $B$ is a flat $A$-algebra;
\item[(b)] $A$ is a Dedekind domain and
$G$ is geometrically reductive over $A$ and has connected geometric fibers.
\end{itemize}

\item If $G$ is geometrically reductive over $A$ and has connected geometric fibers, then $\beta_X$ and
$\beta_\Gamma$ are always adequate homeomorphisms.

\end{enumerate}

\end{prop}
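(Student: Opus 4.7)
The plan is to recast $\beta_X$ and $\beta_\Gamma$ as base-change morphisms for the invariant functor $V\mapsto V^G$ applied to the coordinate ring $\Of(G^X)$, together with a comparison of the ideal $I_{\Gamma,G}\subset R^1_{\Gamma,G}$ under base change. By construction, $\beta_X$ corresponds to the natural $B$-algebra map $\Of(G^X)^G\otimes_A B\to\Of(G_B^X)^{G_B}$, and $\beta_\Gamma$ to the induced map on the quotients by $I_{\Gamma,G}\otimes_A B$ and $I_{\Gamma,G_B}$. Hence $\beta_X$ (resp.\ $\beta_\Gamma$) is an isomorphism or adequate homeomorphism as soon as the analogous property holds for invariants (resp.\ for invariants together with the identification $I_{\Gamma,G}\otimes_A B\iso I_{\Gamma,G_B}$, which will follow from the statement on invariants by right-exactness of $-\otimes_A B$ applied to the images of the $\varphi_{\gamma,\delta}$).

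For case (i)(a) I would use that for any $A$-flat affine group scheme $G$ acting on an $A$-module $V$, the submodule $V^G$ is the equalizer of the coaction $V\to V\otimes_A\Of(G)$ and the map $v\mapsto v\otimes 1$; since $\Of(G)$ is $A$-flat, such equalizers commute with flat base change, giving $V^G\otimes_A B=(V\otimes_A B)^{G_B}$ whenever $B$ is $A$-flat. For (ii) I would invoke Alper's theory of adequate moduli spaces: when $G$ is geometrically reductive and acts on an affine $A$-scheme $\Spec R$, the morphism $\Spec R\to\Spec R^G$ is an adequate moduli space, and the base-change comparison $\Spec((R\otimes_A B)^{G_B})\to\Spec(R^G)\times_{\Spec A}\Spec B$ is an adequate homeomorphism for every $A$-algebra $B$. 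Applied to $R=\Of(G^X)$ and subsequently passed to the quotient by the ideal generated by the invariant images of the $\varphi_{\gamma,\delta}$, this yields the adequate-homeomorphism statements of (ii) for both $\beta_X$ and $\beta_\Gamma$.

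For (i)(b) I would combine (a) and (ii) with the base-change compatibility of invariants for geometrically reductive group schemes with connected geometric fibers over a Dedekind base. Part (a) applied to the flat localization $A\to\Frac(A)$ shows that $\beta_X$ and $\beta_\Gamma$ become isomorphisms generically, while (ii) shows they are adequate homeomorphisms; the flatness of $R^1_{X,G}$ over the Dedekind domain $A$ (as a torsion-free submodule of the $A$-flat module $\Of(G^X)$), combined with the Seshadri-type fact that over a Dedekind base the formation of invariants for a geometrically reductive group scheme with connected geometric fibers commutes with arbitrary base change on $A$-flat rational modules, promotes these adequate homeomorphisms to actual isomorphisms. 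The main obstacle I anticipate is precisely this last upgrade: steps (i)(a) and (ii) are essentially formal once flat base change and Alper's framework are in hand, but (i)(b) rests on the precise base-change theorem for reductive invariants over Dedekind rings, where the connectedness of the geometric fibers of $G$ enters essentially.
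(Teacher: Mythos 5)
Your treatment of (i)(a) and (ii) matches the paper's: for (a) you in effect re-prove Seshadri's Lemma~2 (invariants as an equalizer, hence a kernel, commuting with flat base change) and then use right-exactness of $-\otimes_A B$ to identify $I_{\Gamma,G}\otimes_A B$ with $I_{\Gamma,G_B}$ and descend to $\beta_\Gamma$; for (ii) you invoke Alper's base-change result for adequate moduli spaces (Proposition~5.2.9(3) of~\cite{Alper}), which is exactly the paper's one-line argument.

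The gap is in (i)(b). The entire content of that case is the assertion you call ``the Seshadri-type fact that over a Dedekind base the formation of invariants for a geometrically reductive group scheme with connected geometric fibers commutes with arbitrary base change,'' applied to $\Of(G^Y)$; you cite it as known but do not prove it, and the surrounding scaffolding does not supply it: an adequate homeomorphism that is an isomorphism over $\Frac(A)$ need not be an isomorphism (it may be a nil-immersion, or purely inseparable on fibres of positive residue characteristic), so ``generic isomorphism plus adequate homeomorphism'' cannot be upgraded without exactly the base-change statement you are trying to establish. The paper isolates that statement as Lemma~\ref{lemme_no_way} and proves it by reducing to finite $Y$, applying the universal-coefficients exact sequence
\[0\ra\Of(G^Y)^G\otimes_A B\ra\Of(G_B^Y)^{G_B}\ra\Tor_1^A(\H^1(G,\Of(G^Y)),B)\ra 0,\]
and showing $\H^1(G,\Of(G^Y))=0$ by localizing at the primes of $A$ and checking that $\Of(G_k^Y)$ admits a good filtration over each residue field $k$ (Donkin, Mathieu; Propositions~4.20 and~4.21 of Chapter~II and Lemma~B.9 of~\cite{Jantzen}). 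This cohomological input --- which is also where the connectedness of the geometric fibers genuinely enters --- is the step missing from your argument.
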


\begin{proof}
Point (ii) follows from Proposition~5.2.9(3) of~\cite{Alper}.
We prove (i). Suppose first that $B$ is flat over $A$. Then,
by Lemma~2 of Seshadri's paper~\cite{Seshadri}, for every
$A$-module $V$ with an action of $G$, the canonical morphism
$V^G\otimes_A B\ra(V\otimes_A B)^{G_B}$ is an isomorphism. Applying
this lemma to $\Of(G^X)$, we see that $\beta_X$ is an isomorphism.
As the functor $(-)\otimes_A B$ is right exact, Seshadri's lemma
also implies that $I_{\Gamma,G}\otimes_A B=I_{\Gamma,G_B}$, hence
that $\beta_\Gamma$ is an isomorphism.
We finally assume that $A$ is a principal ideal domain and that
$G$ is geometrically reductive over $A$. By Lemma~\ref{lemme_no_way},
the morphism $\Of(G^Y)^G\otimes_A B\ra\Of(G_B^Y)^{G_B}$ is an isomorphism
for every set $Y$; as in the proof of (i)(a), we conclude that
$\beta_X$ and $\beta_\Gamma$ are isomorphisms.

\end{proof}

\begin{lemma}\label{lemme_no_way}
Let $A$ be a Dedekind domain and $G$ be a geometrically reductive
group scheme with connected geometric fibers over $A$. Then, for every set $Y$, the injective morphism
$\Of(G^Y)^G\otimes_A B\ra\Of(G_B^Y)^{G_B}$ is an isomorphism.

\end{lemma}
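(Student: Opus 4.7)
The plan is to reduce to the case of $Y$ finite, leverage the Dedekind hypothesis to prove flatness of $\Of(G^F)^G$ and of $\Of(G^F)/\Of(G^F)^G$, and close the argument with a cocycle calculation using that connected reductive groups have no additive characters.

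For the reduction, since $G^Y = \varprojlim_{F \subset Y\ \mathrm{finite}} G^F$, we have $\Of(G^Y) = \varinjlim_F \Of(G^F)$ as $G$-modules; both $G$-invariants (as a kernel) and $(-)\otimes_A B$ commute with filtered colimits, so it suffices to prove the lemma for each finite $F$. Fix $F$ and set $V = \Of(G^F)$. Flatness of $V$ over $A$ follows from the flatness of $G$ built into the definition of geometric reductivity; over the Dedekind domain $A$ this means $V$ is torsion-free, and hence so is its submodule $V^G$. To see $V/V^G$ is also flat (i.e.\ torsion-free), write $\nu \colon V \to V \otimes_A \Of(G)$ for the coaction minus the trivial map (so $V^G = \ker \nu$): if $av \in V^G$ with $a \in A$ nonzero, then $a\nu(v) = \nu(av) = 0$ in the torsion-free module $V \otimes_A \Of(G)$, forcing $\nu(v) = 0$, i.e.\ $v \in V^G$. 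Consequently the short exact sequence $0 \to V^G \to V \to V/V^G \to 0$ of flat $A$-modules remains exact after tensoring with any $A$-algebra $B$.

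Surjectivity of $V^G \otimes_A B \to (V \otimes_A B)^{G_B}$ is thus equivalent to the vanishing of the image of $(V \otimes_A B)^{G_B}$ in $(V/V^G) \otimes_A B$, and I would aim to prove the stronger statement $((V/V^G) \otimes_A B)^{G_B} = 0$. Setting $W := V/V^G$, the long exact sequence of invariants applied to $0 \to V^G \to V \to W \to 0$ identifies $W^G$ with a subgroup of $H^1(G, V^G)$; for the trivial $G$-action on $V^G$, this $H^1$ equals $\Hom_{A\text{-gp}}(G, (V^G)_a)$, where $(V^G)_a$ denotes the additive $A$-group scheme associated to $V^G$. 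For $G$ with connected reductive geometric fibers there are no nonzero such morphisms (equivalently, the primitive elements of the Hopf algebra $\Of(G)$ vanish), so $W^G = 0$.

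The main obstacle is propagating this vanishing to arbitrary base change $A \to B$: to deduce $(W \otimes_A B)^{G_B} = 0$ one needs $\Hom_{B\text{-gp}}(G_B, (V^G \otimes_A B)_a) = 0$, or equivalently that the primitive elements of the Hopf algebra $\Of(G) \otimes_A B$ vanish. Fiberwise over residue fields of $B$ this is the same classical statement about connected reductive groups, and globalising to $B$ uses the Dedekind hypothesis on $A$ together with the flatness of $\Of(G)$ to make primitives commute with the base change $A \to B$—a torsion-freeness check on the cokernel of the relevant coaction-minus-trivial map. This base-change compatibility of the cocycle/primitive-elements vanishing is the crux of the argument.
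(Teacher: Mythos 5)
Your reduction to finite $Y$, the flatness of $V=\Of(G^F)$ and $V^G$ over the Dedekind domain $A$, and the torsion-freeness of $W=V/V^G$ (hence the exactness of $0\ra V^G\otimes_A B\ra V\otimes_A B\ra W\otimes_A B\ra 0$) are all correct, and the last point does give the injectivity cleanly. The gap is in the surjectivity step, and it is structural rather than technical. Over $A$ itself, the inclusion $W^G\hookrightarrow\H^1(G,V^G)$ comes from the long exact sequence \emph{because} the map $V^G=(V)^G\ra W^G$ is zero, i.e.\ because the invariants of $V$ are exactly $V^G$. Over $B$ this is precisely the statement you are trying to prove: writing $Q$ for the cokernel of $V^G\otimes_A B\ra(V\otimes_A B)^{G_B}$, the long exact sequence identifies $Q$ with the image of $(V\otimes_A B)^{G_B}$ in $(W\otimes_A B)^{G_B}$ and only gives an injection of $(W\otimes_A B)^{G_B}/Q$ into $\H^1(G_B,V^G\otimes_A B)$. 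So even if you establish that $G_B$ has no additive characters with values in $V^G\otimes_A B$ (the base-change-of-primitives point you flag as the crux), you conclude $(W\otimes_A B)^{G_B}=Q$, not $Q=0$. The argument is circular and does not close.

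The deeper issue is that the obstruction to surjectivity does not live in cohomology with \emph{trivial} coefficients at all. The paper's proof applies the universal coefficients theorem (Proposition~4.18 in Chapter~I of Jantzen) to get
\[0\ra\Of(G^Y)^G\otimes_A B\ra\Of(G_B^Y)^{G_B}\ra\Tor_1^A(\H^1(G,\Of(G^Y)),B)\ra 0,\]
so the relevant group is $\H^1(G,\Of(G^F))$ for the \emph{conjugation} action, a highly nontrivial module. Its vanishing is the real content of the lemma: after localizing to reduce to $A$ a field or a discrete valuation ring, it follows (via Lemma~B.9 of Jantzen) from the fact that $\Of(G_k)$ under conjugation has a good filtration and that tensor products of modules with good filtrations again have good filtrations — theorems of Donkin and Mathieu. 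The absence of homomorphisms $G\ra\Gr_a$ controls only $\H^1$ with trivial coefficients and is far too weak to substitute for this input; indeed $\H^1(G_k,M)$ is nonzero for many modules $M$ over fields of positive characteristic, and it is exactly the good-filtration property of $\Of(G^F)$ that rules this out here. To repair your argument you would need to show either that $\H^1(G,V)=0$ (or at least is $A$-flat), or that $W$ admits a good filtration with no trivial factors — both of which amount to importing the Donkin–Mathieu theory that your proposal avoids.
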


\begin{rmk}
Lemma \ref{lemme_no_way} and its application to the functor $\LPC$ are
probably well-known to many people in some form. We found version of it
in a note by Chen (see~\cite{Chen}), as well as in a preprint by
Quast (see Section~1.3 of~\cite{Quast}).
  
\end{rmk}

\begin{proof}[Proof of Lemma \ref{lemme_no_way}]
We will use a number of results about the cohomology of algebraic groups
that are gathered in Jantzen's book \cite{Jantzen}; the particular
results that we need are due to Donkin and Mathieu, 
see~\cite{Jantzen} for the original references.

First, as the tensor product and the functor of invariants commute
with direct limits and as $\Of(G^Y)$ is the direct limit of the
$\Of(G^Z)$ for $Z \subset Y$ finite, we may assume that
$Y$ is finite.
By the universal coefficients theorem (Proposition~4.18 in Chapter~I
of~\cite{Jantzen}), we have an exact sequence
\[0\ra\Of(G^Y)^G\otimes_A B\ra\Of(G_B^Y)^{G_B}\ra
\Tor_1^A(\H^1(G,\Of(G^Y)),B)\ra 0.\]
So it suffices to show that $\H^1(G,\Of(G^Y))=0$. 
For this, it suffices to proves that the localization of
$\H^1(G,\Of(G^Y))$ at every ideal of $A$ is zero; as localizations are flat,
by the universal coefficient theorem again, we may replace $A$ by one of
its localizations, hence assume that $A$ is a field or a discrete valuation
ring. Now, by Lemma~B.9
of~\cite{Jantzen}, it suffices to prove that, for every maximal
ideal $\mfrak$ of $A$, if $k=A/\mfrak$, then  $\Of(G_k^Y)$ has a good
filtration (in the sense of II.4.16 of~\cite{Jantzen}) as a $G_k$-module.
This follows immediately from Propositions~4.20 and~4.21 of
Chapter~II of~\cite{Jantzen}.

\end{proof}

We finally investigate the relationship between pseudo-characters and
the character variety. 

\begin{defi}\label{def_carvar}
Let $A$ be a commutative ring, $G$ be a flat affine
group scheme over $A$ and $\Gamma$ be a group.
The \emph{$G$-character variety of $\Gamma$} is the affine scheme 
$\Char_{\Gamma,G}=\Spec((\Of(G^\Gamma)/J_{\Gamma,G})^G)$.

\end{defi}

If $B$ is an $A$-algebra, then we have
$I_{\Gamma,G_B}\supset I_{\Gamma,G}\otimes_A B$,
so we get a morphism of $B$-algebras
\[(\Of(G^\Gamma)/J_{\Gamma,G})^G\otimes_A B\ra
((\Of(G^\Gamma)/J_{\Gamma,G})\otimes_A B)^{G_B}=
(\Of(G_B^\Gamma)/I_{\Gamma,G_B})^{G_B},\]
and a corresponding morphism of schemes
\[\beta_\Gamma':\Char_{\Gamma,G_B}\ra\Char_{\Gamma,G}\otimes_A B.\]

\begin{lemma}\label{lemme_BC}
Suppose that $G$ is geometrically reductive over $A$ and has connected geometric fibers.
\begin{enumerate}
\item If $B$ is a flat $A$-algebra, then $\beta'_\Gamma$ is an
isomorphism.
\item In general, $\beta'_\Gamma$ is an adequate homeomorphism.

\end{enumerate}
\end{lemma}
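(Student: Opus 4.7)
The plan is to reduce both parts of Lemma~\ref{lemme_BC} to tools already used in the proof of Proposition~\ref{prop_BC}. The key preliminary observation is that, as noted in the paragraph preceding Proposition~\ref{prop_BC}, one has the equality $J_{\Gamma,G_B}=J_{\Gamma,G}\otimes_A B$ (not just a containment), because $J_{\Gamma,G}$ is by construction the sum of the images of the $A$-linear maps $\varphi_{\gamma,\delta}$ and tensor product is right exact. Setting $V=\Of(G^\Gamma)/J_{\Gamma,G}$, this identity gives $V\otimes_A B=\Of(G_B^\Gamma)/J_{\Gamma,G_B}$, and consequently $\beta'_\Gamma$ is induced by the canonical comparison map of invariants
\[V^G\otimes_A B\ra (V\otimes_A B)^{G_B}.\]

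For part~(i), I would apply Seshadri's Lemma~2 of~\cite{Seshadri} exactly as in the proof of Proposition~\ref{prop_BC}(i)(a): when $B$ is flat over $A$, for any $A$-module $V$ equipped with a $G$-action the comparison map above is an isomorphism. Applied to our $V$, this immediately yields that $\beta'_\Gamma$ is an isomorphism. Observe that the hypotheses that $G$ be geometrically reductive and have connected geometric fibers play no role in this part.

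For part~(ii), in full generality, I would cite Alper's Proposition~5.2.9(3) of~\cite{Alper}, in the same spirit as the proof of Proposition~\ref{prop_BC}(ii). The scheme $\Char_{\Gamma,G}=\Spec(V^G)$ is the adequate moduli space of the action of the geometrically reductive group scheme $G$ on the closed subscheme $Z:=\Spec(V)$ of $G^\Gamma$, and Alper's result asserts precisely that the formation of such adequate moduli spaces commutes with arbitrary base change up to adequate homeomorphism. The preliminary identity $J_{\Gamma,G_B}=J_{\Gamma,G}\otimes_A B$ is exactly what is needed to ensure that the closed subscheme $Z$ itself behaves well under base change, so that Alper's theorem applies directly to $Z\to \Spec(V^G)$.

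I do not expect a genuine obstacle here: both parts are essentially bookkeeping reductions to cited external results (Seshadri for (i), Alper for (ii)). The only point requiring mild care is to verify that the right exactness argument for $J_{\Gamma,G}$ really does give an equality rather than just an inclusion — which it does, since we are tensoring a right-exact sequence of the form $\bigoplus \Of(G\times G^\Gamma)\to \Of(G^\Gamma)\to \Of(G^\Gamma)/J_{\Gamma,G}\to 0$ with $B$.
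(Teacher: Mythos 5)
Your proof is correct and follows essentially the same route as the paper: part (ii) is the same citation of Proposition~5.2.9(3) of~\cite{Alper}, and the preliminary identification of $\beta'_\Gamma$ with the comparison map of invariants for $V=\Of(G^\Gamma)/J_{\Gamma,G}$ is exactly the bookkeeping already set up before the lemma. The only (harmless) divergence is in part (i), where the paper cites Proposition~5.2.9(1) of~\cite{Alper} while you invoke Seshadri's Lemma~2 as in the proof of Proposition~\ref{prop_BC}(i)(a); both amount to flat base change commuting with invariants, and your observation that geometric reductivity is not needed there is accurate.
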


\begin{proof}
Point (i) follows from Proposition~5.2.9(1) of~\cite{Alper}, and
point (ii) from Proposition~5.2.9(3) of the same paper.

\end{proof}

\begin{prop}\label{prop_charvar}
Suppose that $G$ is a geometrically reductive group scheme with connected geometric fibers over $A$
and that $A$ is a Dedekind domain or a field.
Let $\iota$ be the morphism $\Char_{\Gamma,G}\ra\LPC_{\Gamma,G}$ induced by
$\Of(G^\Gamma)^G/I_{\Gamma,G}\twoheadrightarrow\Of(G^\Gamma)^G/J_{\Gamma,G}^G
\hookrightarrow(\Of(G^\Gamma)/J_{\Gamma,G})^G$.
\begin{enumerate}
\item If $A$ is a field of characteristic $0$, then $\iota$ is an
isomorphism.
\item In general, $\iota$ is an adequate homeomorphism. 
\end{enumerate}

\end{prop}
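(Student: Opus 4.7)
The plan is to factor $\iota$ as the composition $\iota=\iota_2\circ\iota_1$ of two scheme morphisms, coming from the factorization of the defining ring map as
\[R_{\Gamma,G}=\Of(G^\Gamma)^G/I_{\Gamma,G}\twoheadrightarrow\Of(G^\Gamma)^G/J_{\Gamma,G}^G\hookrightarrow(\Of(G^\Gamma)/J_{\Gamma,G})^G.\]
Here $\iota_1:\Char_{\Gamma,G}\to S:=\Spec(\Of(G^\Gamma)^G/J_{\Gamma,G}^G)$ is induced by the injection on the right, and $\iota_2:S\hookrightarrow\LPC_{\Gamma,G}$ is the closed immersion induced by the surjection on the left. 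I would show that each of $\iota_1,\iota_2$ is an adequate homeomorphism in general, and an isomorphism when $A$ is a field of characteristic zero; the proposition then follows, since both properties are stable under composition.

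For $\iota_1$: this is the natural map from the GIT quotient $(\Of(G^\Gamma)/J_{\Gamma,G})^G$ of the affine $G$-scheme $\Spec(\Of(G^\Gamma)/J_{\Gamma,G})$ (which parametrizes group homomorphisms $\Gamma\to G$) to the scheme-theoretic image of the latter in $\Spec R^1_{\Gamma,G}$. Since $G$ is geometrically reductive with connected geometric fibers and $A$ is a Dedekind domain or a field, Proposition~5.2.9(3) of~\cite{Alper} applies and yields that $\iota_1$ is an adequate homeomorphism. When $A$ is a field of characteristic zero, reductive groups are linearly reductive, so applying the (now exact) invariants functor to $0\to J_{\Gamma,G}\to\Of(G^\Gamma)\to\Of(G^\Gamma)/J_{\Gamma,G}\to 0$ identifies the two rings, and $\iota_1$ is an isomorphism.

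For $\iota_2$: the key is to control the ideal $K=J_{\Gamma,G}^G/I_{\Gamma,G}$. The $G$-equivariant surjection $\bigoplus_{(\gamma,\delta)\in\Gamma\times\Gamma}\Of(G\times G^\Gamma)\twoheadrightarrow J_{\Gamma,G}$ assembled from the $\varphi_{\gamma,\delta}$ has, after passing to $G$-invariants, image exactly $I_{\Gamma,G}$. In characteristic zero this passage is exact, so $I_{\Gamma,G}=J_{\Gamma,G}^G$, the ideal $K$ vanishes, and $\iota_2$ is an isomorphism. In the general geometrically reductive setting, the quotient map $G^\Gamma\to\Spec R^1_{\Gamma,G}$ sends the $G$-stable closed subscheme cut out by $J_{\Gamma,G}$ to a closed subset of $\Spec R^1_{\Gamma,G}$, which on the one hand is set-theoretically the vanishing locus of $J_{\Gamma,G}^G$ and on the other is contained in the vanishing locus of $I_{\Gamma,G}$; by minimality, $\sqrt{J_{\Gamma,G}^G}=\sqrt{I_{\Gamma,G}}$, i.e.\ $K$ is a nil ideal. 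Hence the closed immersion $\iota_2$ is a universal homeomorphism (integral automatically), and by the characteristic-zero vanishing it is a local isomorphism at every point of residue characteristic zero, so it is an adequate homeomorphism.

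The step I expect to be the main obstacle is the ``closed image'' argument for $\iota_2$ in the general case, particularly when $A$ is a Dedekind domain rather than a field: making the topological claim $\sqrt{J_{\Gamma,G}^G}=\sqrt{I_{\Gamma,G}}$ rigorous requires Alper's theory of adequate moduli spaces for geometrically reductive group schemes. Reducing fiberwise via the base-change compatibility of Proposition~\ref{prop_BC} and Lemma~\ref{lemme_BC} should allow one to invoke the classical fact residue-field by residue-field.
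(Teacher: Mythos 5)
Your factorization of $\iota$ through $\Spec(\Of(G^\Gamma)^G/J_{\Gamma,G}^G)$ is the same one the paper uses, and your treatment of $\iota_1$ (via Alper's results on adequate moduli spaces of $G$-stable closed subschemes --- the relevant statement is Lemma~5.2.12 of \cite{Alper} rather than Proposition~5.2.9(3), which is about base change) and of the characteristic-zero case (exactness of invariants, i.e.\ the Reynolds operator) is fine. The genuine gap is in your argument for $\iota_2$. The claim that $\sqrt{J_{\Gamma,G}^G}=\sqrt{I_{\Gamma,G}}$ follows ``by minimality'' is a non sequitur: minimality of the scheme-theoretic image of $V(J_{\Gamma,G})$ only gives $V(J_{\Gamma,G}^G)\subset V(I_{\Gamma,G})$, which is the inclusion you already have for free from $I_{\Gamma,G}\subset J_{\Gamma,G}^G$. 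What must be proved is the \emph{reverse} inclusion: that every point of $\Spec(\Of(G^\Gamma)^G/I_{\Gamma,G})$ with values in an algebraically closed field $L$ --- that is, every $L$-valued Lafforgue pseudocharacter --- lies over the image of an actual group homomorphism $\Gamma\ra G(L)$. This is not a formal consequence of adequate moduli space theory; it is the substance of Lafforgue's Proposition~11.7 and of Theorem~4.5 of \cite{BHKT}, and it is where essentially all of the work in the paper's proof of part (ii) goes.

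Concretely, the paper establishes this surjectivity by lifting the given point to a tuple $(g_\gamma)_{\gamma\in\Gamma}\in G^\Gamma(L)$ chosen so that the dimension of a maximal torus of $Z_G(H)$ is maximal, where $H$ is the closed subgroup generated by the $g_\gamma$; it shows that $H$ is then strongly reductive in Richardson's sense, so that by Richardson's Theorem~16.4 the $G$-orbits of $(g_{\gamma\delta},(g_\alpha)_{\alpha\in X})$ and $(g_\gamma g_\delta,(g_\alpha)_{\alpha\in X})$ are closed; since vanishing on $I_{\Gamma,G}$ forces these two tuples to have the same image in the invariant-theoretic quotient, they are conjugate by an element centralizing all the $g_\alpha$, whence $g_{\gamma\delta}=g_\gamma g_\delta$ and $\gamma\mapsto g_\gamma$ is a homomorphism. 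You could instead quote \cite{BHKT} for this step, but some input of this kind (complete reducibility, strongly reductive subgroups, closed orbits) is indispensable, and ``reducing fiberwise and invoking the classical fact'' does not supply it. Your remaining reductions (base change to residue fields via Proposition~\ref{prop_BC} and Lemma~\ref{lemme_BC}, and the local-isomorphism condition at residue characteristic $0$) do match the paper's argument.
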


The statement of the proposition is very close to that of
Proposition~11.7 of~\cite{Lafforgue1} and Theorem~4.5 of~\cite{BHKT}, and
its proof uses the same kind of ideas. We still include it here because
it is not very hard.

\begin{proof}
We prove (i). As $A$ is a field, the group $G$ is reductive over $A$. So,
for every algebraic representation $V$ of $G$ over $A$ (not
necessarily finite-dimensional), we have the Reynolds operator
$E=E_V:V\ra V$ (see \cite{Mumford} Definition 1.5), 
which is a $G$-equivariant projection with image $V^G$, compatible with
any morphism of representations $V\ra W$; also, if $V$ is a $A$-algebra
and the action of $G$ preserves its multiplication, then
$E_V$ is $V^G$-linear.
We claim that $I_{\Gamma,G}=J_{\Gamma,G}^G$. We already know that $I_{\Gamma,G}
\subset J_{\Gamma,G}^G$.
Conversely, let $h\in J_{\Gamma,G}^G$. Write $h=\sum_{i=1}^n
\varphi_{\gamma_i,\delta_i}(f_i)$,
with $\gamma_i,\delta_i\in\Gamma$ and $f_i\in\Of(G\times G^\Gamma)$. As the
$\varphi_{\gamma_i,\delta_i}$ are $G$-equivariant morphisms, they are 
compatible with the Reynolds operators, so
\[h=E(h)=\sum_{i=1}^n\varphi_{\gamma_i,\delta_i}(E(f_i))\in I_{\Gamma,G}.\]
It remains to prove that the injective morphism $\Of(G^\Gamma)^G/J_{\Gamma,G}^G\ra
(\Of(G^\Gamma)/J_{\Gamma,G})^G$ is also surjective. Let $f\in\Of(G^\Gamma)$, and
suppose that the class of $f$ modulo $J_{\Gamma,G}$ is $G$-invariant.
As the canonical surjection $\Of(G^\Gamma)\ra\Of(G^\Gamma)/J_{\Gamma,G}$ is
$G$-equivariant, it is compatible with the Reynolds operators, so we
deduce that $f-E(f)\in J_{\Gamma,G}$, which means that $f+J_{\Gamma,G}$
is in the image of $\Of(G^\Gamma)^G/J_{\Gamma,G}^G$.

We prove (ii). We know that $\Char_{\Gamma,G}\ra
\Spec(\Of(G^\Gamma)^G/J_{\Gamma,G}^G)$ is an adequate homeomorphism by
Lemma~5.2.12 of \cite{Alper} (see Remark~5.2.14 of~\emph{loc. cit.}).
So it remains to prove that $\iota':\Spec(\Of(G^\Gamma)^G/J_{\Gamma,G}^G)\ra
\Spec(\Of(G^\Gamma)^G/I_{\Gamma,G})$ is an adequate homeomorphism. 
This morphism is a closed embedding, hence it is integral, universally
injective and universally closed.
If $\Frac(A)$ is of characteristic $0$, then $\iota'$ becomes an
isomorphism after we tensor it by $\Frac(A)$ by 
Proposition~\ref{prop_BC}(ii); 
otherwise, its source
and target have no point with residue field of characteristic $0$.
So it remains to show that $\iota'$ is surjective, which is equivalent
to the fact that $\iota$ is surjective.

Let $x$ be a point of $\Spec(\Of(G^\Gamma)^G/I_{\Gamma,G})$,
which corresponds to a morphism of $A$-algebras $u:\Of(G^\Gamma)^G/I(\Gamma,G)\ra
K$, with $K$ a field. We want to find an extension $L$ of $K$ and
a morphism of $A$-algebras $v:(\Of(G^\Gamma)/J_{\Gamma,G})^G\ra L$ such that
$v\circ{\iota}^*=u$. We may always enlarge $K$ and $L$, so
we may assume that they are algebraically closed. Then, by
Proposition~\ref{prop_BC}(ii) and Lemma~\ref{lemme_BC}(ii), 
$\Char_{\Gamma,G}$ and $\Char_{\Gamma,G_K}$ (resp. $\LPC_{\Gamma,G}$ and
$\LPC_{\Gamma,G_K}$) have the same points over any algebraically closed
extension of $K$, so
we may assume that $A=K$ (and so that $G$ is reductive over $K$).
By Theorem~5.13 of~\cite{Popp} (or Lemma~5.2.1 and Remark~5.2.2
of~\cite{Alper}), there exists an extension $L$ of
$K$ and a morphism of $K$-algebras $w:\Of(G^\Gamma)\ra L$ such that
$u$ is induced by $w_{\mid\Of(G^\Gamma)^G}$.
The morphism $w$ corresponds to an element $(g_\gamma)\in G^\Gamma(L)$, and
we denote by $H$ the closed subgroup of $G_L$ generated by the
set $\{g_\gamma,\ \gamma\in\Gamma\}$. We choose $w$ such that
the dimension of the maximal tori in $Z_G(H)$ is maximal.
We claim that $H$ is then strongly reductive
in $G$ in the sense of Definition~16.1 of Richardson's paper~\cite{Richardson},
that is, $H$ is not contained in any proper parabolic subgroup of
$Z_G(S)$, where $S$ is a maximal torus of $Z_G(H)$.
Indeed, let $\lambda$ be a cocharacter of $Z_G(S)$, and suppose that
$H$ is contained in $P(\lambda):=\{g\in Z_G(S)\mid \lim_{t\to 0}\lambda(t)g
\lambda(t)^{-1}\ \mathrm{exists}\}$. For every $\gamma\in\Gamma$, let
$g'_\gamma=\lim_{t\to 0}\lambda(t)g_\gamma\lambda(t)^{-1}\in G(L)$. Then 
$(g_\gamma)_{\gamma\in\Gamma}$ and $(g'_\gamma)_{\gamma\in\Gamma}$ have the same
image in $\Spec(\Of(G^\Gamma)^G)(L)$, so the morphism $w':\Of(G^\Gamma)\ra L$
corresponding to $(g'_\gamma)_{\gamma\in\Gamma}\in G^\Gamma(L)$
satisfies $w'_{\mid\Of(G^\Gamma)^G}=w_{\mid\Of(G^\Gamma)^G}$. On the other hand,
the closed subgroup $H'$ of $G$ generated by the $g'_\gamma$ is contained
in the centralizer of $\lambda$ in $Z_G(S)$, so $\lambda$ has to be central
in $Z_G(S)$, otherwise $Z_G(H')$, which contains the group generated by $S$
and the image of $\lambda$, would have a maximal torus of dimension
greater than $\dim(S)$. So $H$ is not contained in any proper parabolic
subgroup of $Z_G(S)$.
Also, as $H$ is a Noetherian scheme, for every big enough finite subset
$X$ of $\Gamma$, the closed subgroup of $G$ generated by
$\{g_\gamma,\ \gamma\in X\}$ is equal to $H$. 

We now prove that, for this choice of $w$, the map
$\gamma\mapsto g_\gamma$ is a morphism
of groups; this implies that $w$ extends to $\Of(G^\Gamma)/J_{\Gamma,G}$,
hence defines a point $y$ of $\Char_{\Gamma,G}(L)$ such that $\iota(y)=x$.
Let $\gamma,\delta\in\Gamma$. Choose a finite subset $X$ of
$\Gamma$ such that $\gamma,\delta,\gamma\delta\in X$ and
the closed subgroup of $G$ generated by $\{g_\alpha,\ \alpha\in X\}$ is
equal to $H$.
As $w$ vanishes on $I_{\Gamma,G}$, the images of
$(g_{\gamma\delta},(g_\alpha)_{\alpha\in\Gamma})$ and
$(g_{\gamma}g_{\delta},(g_\alpha)_{\alpha\in\Gamma})$ by the map
$(G\times G^\Gamma)(L)\ra\Spec(\Of(G\times G^\Gamma)^G)(L)$ are
equal, so the images of
$(g_{\gamma\delta},(g_\alpha)_{\alpha\in X})$ and
$(g_{\gamma}g_{\delta},(g_\alpha)_{\alpha\in X})$ by the map
$(G\times G^X)(L)\ra\Spec(\Of(G\times G^X)^G)(L)$ are also equal. 
The closed subgroups of $G$ generated by the families
$(g_{\gamma\delta},(g_\alpha)_{\alpha\in X})$ and
$(g_{\gamma}g_{\delta},(g_\alpha)_{\alpha\in X})$ are both equal to $H$, hence
strongly reductive in $G$; so,
by Theorem~16.4 of Richardson's paper~\cite{Richardson},
the $G$-orbits of these families
in $(G\times G^X)(L)$ are closed. As they have the same image in
$\Spec(\Of(G\times G^X)^G)(L)$, and as $G\times G^X$ is of finite type
over $K$, this implies that they are in the same $G$-conjugacy class.
Let $h\in G(L)$ such that $h(g_{\gamma\delta},(g_\alpha)_{\alpha\in X})h^{-1}=
(g_{\gamma}g_{\delta},(g_\alpha)_{\alpha\in X})$. Then $h$ centralizes all the
$g_\alpha$ for $\alpha\in X$, so $g_\gamma g_\delta=hg_{\gamma\delta}h^{-1}=
g_{\gamma\delta}$. This finishes the proof.

\end{proof}

\begin{rmk}
The morphism $\Char_{\Gamma,G}\ra\LPC_{\Gamma,G}$ of Proposition~\ref{prop_charvar}
is an isomorphism if and only if both morphisms
$\Of(G^\Gamma)^G/I_{\Gamma,G}\twoheadrightarrow\Of(G^\Gamma)^G/J_{\Gamma,G}^G$ and
$\Of(G^\Gamma)^G/J_{\Gamma,G}^G
\hookrightarrow(\Of(G^\Gamma)/J_{\Gamma,G})^G$ are isomorphisms.
This seems unlikely, but we cannot offer a counterexample.

\end{rmk}

\section{A Lafforgue pseudocharacter gives rise to a determinant}
\label{LPCDet}

Let $A$ be a commutative unital ring and $d$ be a positive integer.
If $X$ is a set and $\Gamma$ is a group, we write $\LPC^1_{X,d}$ and
$\LPC_{\Gamma,d}$ instead of $\LPC^1_{X,\GL_{d,A}}$ and
$\LPC_{\Gamma,\GL_{d,A}}$.
We will also use the notation of Example~\ref{ex_LPC}.

Let $X$ be a set, $d$ be a positive integer,
$B$ be a commutative $A$-algebra and $\Theta=
(\Theta_n)$ be an element of $\LPC^1_{X,d}(B)$.
We want to construct an element $\alpha^1_X(\Theta)$ of
$\Det_{A\{X\},d}(B)$, that is, a degree $d$ homogeneous multiplicative
polynomial law from $A\{X\}$ to $B$ (seen as $A$-algebras).

Let $Y,Z$ be
sets and $\sigma:Y\ra Z$ be a map.
If $C$ is a commutative $A$-algebra, then we define a map
$\det_{\sigma,C}:C\{Y\}=A\{Y\}\otimes_A C\ra\Of({\GL}_{d,C}^Z)^{\GL_{d,C}}$ in the
following way. Let $p\in C\{Y\}$. Then $\det_{\sigma,C}(p)$ is the regular
function on ${\GL}_{d,C}^Z$ sending $(g_z)_{z\in Z}$ to
$\det(p((g_{\sigma(y)})_{y\in Y}))$ (where $\det$ is the usual determinant on
$\GL_d$), which is clearly $\GL_{d,C}$-invariant. Note that this construction
is functorial in $C$, and that we have $\Of(\GL_{d,C}^X)^{\GL_{d,C}}=
\Of({\GL}_{d,A}^Z)^{\GL_{d,A}}\otimes_A C$ by Lemma~\ref{lemme_no_way}.
Hence the family $(\det_{\sigma,C})_{C\in\Ob(\Cf_A)}$ defines a degree $d$
homogeneous multiplicative polynomial law from $A\{Y\}$ to
$\Of(\GL_{d,A}^Z)^{\GL_{d,A}}$, which we denote by $\det_\sigma$.

We come back to our $\Theta\in\LPC^1_{X,d}(B)$. By
Proposition~\ref{prop_rep_LPC1}, it corresponds to a morphism of
$A$-algebras $u_\Theta:\Of(\GL_{d,A}^X)^G\ra B$, and we send it to the
polynomial law
\[\alpha^1_X(\Theta)=u_\Theta\circ\det\nolimits_{\id_X}:A\{X\} \ra B.\]
In other words, for every commutative $A$-algebra $C$ and every
$p\in C\{X\}$, the element $\alpha^1_X(\Theta)_C(p)$ of
$B\otimes_A C$ is the image by $u_{\Theta}\otimes\id_C$ of the element
$(g_x)_{x\in X}\ra \det(p(g_x)_{x\in X})$ of $\Of(\GL_{d,C}^X)^{\GL_{d,C}}$.

The functoriality of $\alpha^1_X(\Theta)_C$ in $C$ follows immediately from
its definition, and the fact that it defines a degree $d$ homogeneous
multiplicative polynomial law follows from the properties of the determinant
on $\GL_d$.

\begin{prop}\label{prop_LPCDet}
\begin{enumerate}
\item Let $X$ be a set and $d\geq 1$. Then the maps
$\LPC^1_{X,d}(B)\ra\Det_{A\{X\},d}(B)=\Mcal_A(A\{X\},B)$, $\Theta\mapsto
\alpha^1_X(\Theta)$, form a morphism of functors
$\alpha^1_X:\LPC^1_{X,d}\ra\Det_{A\{X\},d}$ such that, for every
commutative $A$-algebra $B$ and every map $\rho:X\ra\GL_d(B)$, we have
$\alpha^1_X(\Theta^1_\rho)=D_\rho$.
Moreover, the morphisms $\alpha^1_X$ are natural in $X$.

\item Let $\Gamma$ be a group and $d\geq 1$. We denote by $X$ the
underlying set of $\Gamma$.
Then the morphism $\alpha_{X}:\LPC^1_{X,d}\ra\Det_{A\{X\},d}$
restricts to a morphism $\alpha_\Gamma:\LPC_{\Gamma,d}\ra\Det_{A[\Gamma],d}$
such that, for every
commutative $A$-algebra $B$ and every morphism of groups 
$\rho:\Gamma\ra\GL_d(B)$, we have
$\alpha_\Gamma(\Theta_\rho)=D_\rho$.
Moreover, the morphisms $\alpha_\Gamma$ are natural in $\Gamma$.

\end{enumerate}
\end{prop}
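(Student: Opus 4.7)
For part (i), I would proceed by unwinding definitions. First I would verify that $\det_{\id_X}:A\{X\}\to\Of(\GL_{d,A}^X)^{\GL_{d,A}}$ is a degree-$d$ homogeneous multiplicative polynomial law (immediate from the corresponding property of $\det$ on $M_d$), and note that $\alpha^1_X(\Theta)=u_\Theta\circ\det_{\id_X}$ inherits these properties from composition with the algebra map $u_\Theta$ associated to $\Theta$ by Proposition~\ref{prop_rep_LPC1}. Naturality in $B$ will follow because a morphism $B\to B'$ simply post-composes $u_\Theta$. For $\rho:X\to\GL_d(B)$, I would observe that $u_{\Theta^1_\rho}$ is evaluation at the $B$-point $(\rho(x))_{x\in X}$ of $\GL_{d,A}^X$, giving $\alpha^1_X(\Theta^1_\rho)_C(p)=\det(\rho_C(p))=D_\rho(p)$. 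Naturality in $X$ I would check by a direct diagram chase between the pullbacks $u^*$ on $\LPC^1$ and on $\Det$ induced by a map $u:X\to Y$.

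The content of part (ii) is to show that whenever $\Theta\in\LPC_{\Gamma,d}(B)$, the polynomial law $\alpha^1_X(\Theta):A\{X\}\to B$ factors through the canonical surjection $A\{X\}\twoheadrightarrow A[\Gamma]$; once this factoring is in hand, the identity $\alpha_\Gamma(\Theta_\rho)=D_\rho$ and naturality in $\Gamma$ will be immediate from part~(i) via the underlying-set functor $\Gamma\mapsto X$. By Yoneda I may reduce to the universal pseudocharacter $\Theta^\univ\in\LPC_{\Gamma,d}(R_{\Gamma,\GL_d})$, so the task becomes showing that the polynomial law $A\{X\}\to R_{\Gamma,\GL_d}$, $p\mapsto[\det(p((g_\gamma)))]$, is constant on the fibres of $C\{X\}\to C[\Gamma]$ for every commutative $A$-algebra $C$. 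Equivalently, for $p-q\in K_C:=\ker(C\{X\}\to C[\Gamma])$ I must prove that $F:=\det(p((g_\gamma)))-\det(q((g_\gamma)))$ lies in $I_{\Gamma,\GL_d}\otimes_A C$.

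The easy subcase $p=r\cdot\gamma\cdot\delta\cdot s$ (free-algebra product) versus $q=r\cdot(\gamma\delta)\cdot s$ (with $(\gamma\delta)$ the group product viewed as a single element of $X$) I would handle directly via multiplicativity of $\det$: this gives $F=\det(r((g_\gamma)))\det(s((g_\gamma)))\bigl(\det(g_\gamma g_\delta)-\det(g_{\gamma\delta})\bigr)$, and the bracket equals $-\varphi_{\gamma,\delta}(\det(g))\in I_{\Gamma,\GL_d}$, so $F\in I_{\Gamma,\GL_d}$ since the latter is an ideal. For general $p,q$ with $p-q\in K_C$, the non-additivity of $\det$ prevents simply chaining such subcases, so I would turn to a multilinear column expansion: writing $p((g_\gamma))-q((g_\gamma))=\sum_j R_j(g_{\gamma_j}g_{\delta_j}-g_{\gamma_j\delta_j})S_j$ for suitable matrix-valued $R_j,S_j$, I would expand $F$ multilinearly in the columns of this difference. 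Each resulting summand carries a column containing a factor $g_{\gamma_j}g_{\delta_j}-g_{\gamma_j\delta_j}$, and I would realise it as $\varphi_{\gamma_j,\delta_j}(\phi)$ for an explicit $\phi\in\Of(\GL_d\times\GL_d^\Gamma)^{\GL_d}$; the required $\GL_d$-invariance of $\phi$ will come from the cyclic property of the trace, which ensures that any function of the form $(h,(g_\alpha))\mapsto\Tr(h\cdot N((g_\alpha)))$ is invariant under diagonal conjugation.

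The main obstacle is exactly this column-by-column bookkeeping. The purely scheme-theoretic observation that $F$ vanishes at every group-homomorphism point $\Gamma\to\GL_d(L)$ (for $L$ any algebraically closed field) only yields $F\in J_{\Gamma,\GL_d}^{\GL_d}$, which in positive characteristic may strictly contain $I_{\Gamma,\GL_d}$; I genuinely need an explicit presentation of $F$ as a finite $C$-linear combination of $\varphi_{\gamma_j,\delta_j}$-images rather than a soft vanishing argument.
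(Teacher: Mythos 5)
Part (i) of your plan coincides with the paper's proof, and your reading of part (ii) is right in its essentials: the whole content is the factorization of $\alpha^1_X(\Theta)$ through $C\{X\}\twoheadrightarrow C[\Gamma]$, and you correctly diagnose that the soft argument (vanishing at all group-homomorphism points) only places the difference in $J_{\Gamma,\GL_d}^{\GL_d}$ rather than in $I_{\Gamma,\GL_d}$, so an explicit presentation as a sum of $\varphi_{\gamma,\delta}$-images is genuinely required.

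The gap is in the device you propose for producing that presentation. The individual terms of a multilinear column expansion of $\det(p)-\det(q)$ are determinants of matrices whose columns are drawn from \emph{different} conjugation-covariant matrices; such mixed-column determinants are not invariant under diagonal conjugation, and your appeal to the cyclic property of the trace only covers functions of the form $\Tr(h\cdot N)$, which these terms are not. Even after regrouping the expansion into the invariant coefficients of $\det(M+tR\Delta S)$ as a polynomial in $t$, the pieces of degree $k\geq 2$ in $\Delta=g_\gamma g_\delta-g_{\gamma\delta}$ are not linear in $\Delta$, so the linearity trick you use to exhibit a $\varphi_{\gamma,\delta}$-preimage breaks down. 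The paper's resolution is to not expand at all: $\varphi_{\gamma,\delta}$ accepts an \emph{arbitrary} invariant function of the extra variable $g$, with no linearity requirement, so for a single monomial $m=c\,x_{\gamma_1}\cdots x_{\gamma_k}$ of $p$ and $r=p-m$ one takes $F(g,(g_\alpha))=\det\bigl(r((g_\alpha))+c\,g\,g_{\gamma_3}\cdots g_{\gamma_k}\bigr)$, which is manifestly invariant, and whose image $\varphi_{\gamma_1,\gamma_2}(F)$ is exactly the difference of the two full determinants in one shot. Combined with an induction on $n(p)=\sum_m(\deg m-1)$ (contracting one adjacent pair $x_{\gamma_1}x_{\gamma_2}\mapsto x_{\gamma_1\gamma_2}$ at a time until everything has degree at most one), this replaces your general step for $p-q\in K_C$. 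If you try to repair the invariance in your column expansion you will end up regrouping the terms back into exactly this single determinant, so the expansion is a detour; as written, though, the step "realise each summand as $\varphi_{\gamma_j,\delta_j}(\phi)$ with $\phi$ invariant" does not go through.
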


\begin{proof}
\begin{enumerate}
\item The fact that $\alpha^1_X$ is a morphism of functors and the naturality
in $X$ follow easily from the definition of $\alpha^1_X(\Theta)$.
Let $B$ be a commutative $A$-algebra and $\rho:X\ra\GL_d(B)$ be a map; then
the morphism of $A$-algebra $u:\Of(\GL_{d,A}^X)^{\GL_{d,A}} \ra B$ corresponding
to $\Theta_\rho$ is given by $u(f)=f((\rho(x))_{x\in X})$.
So, if $C$ is a commutative $A$-algebra and $p\in C\{X\}$. we have
\[\alpha^1_X(\Theta)_C(p)=\det(p((\rho(x)))_{x\in X})=\det(\rho(p))=D_\rho(p).\]


\item It suffices to prove that $\alpha_{X}$ sends $\LPC_{\Gamma,d}$
to $\Det_{A[\Gamma],d}$, the other statements then follow immediately from (i).

Let $B$ be a commutative $A$-algebra and $\Theta\in\LPC_{\Gamma,d}(B)$.
As $\Theta$ is also in $\LPC^1_{X,d}(B)$, we have a degree $d$
homogeneous polynomial law $D=\alpha^1_{X}(\Theta):A\{X\}\ra B$.
Saying that $D$ is in the image of $\Mcal_A(A[\Gamma],B)$ means that,
for every commutative $A$-algebra $C$,
the map $D_C:C\{X\}\ra B\otimes_A C$ factors through the obvious
surjection $\pi_C:C\{X\}\ra C[\Gamma]$. Fix a commutative $A$-algebra $C$.
We want to prove that, for all
$p,q\in C\{X\}$ such that $\pi_C(p)=\pi_C(q)$, we have $D_C(p)=D_C(q)$.
For $p\in C\{X\}$, let $n(p)$ be the sum over all the nonconstant
monomials $m$ appearing in $p$ of $\deg(m)-1$; so $n(p)=0$ if and only
if $p$ is of degree $\leq 1$. 
We claim that, for every $p\in C\{X\}$ such that
$n(p)\geq 1$, there exists $p_1\in C\{X\}$ such that $n(p_1)=n(p)-1$,
$\pi_C(p_1)=\pi_C(p)$ and $D_C(p_1)=D_C(p)$. This claim implies the desired
result; indeed, if $p,q\in C\{X\}$ are such that $\pi_C(p)=\pi_C(q)$,
then, by the claim, we can find $p_1,q_1\in C\{X\}$ such that
$n(p_1)=n(q_1)=0$, $\pi_C(p_1)=\pi_C(p)=\pi_C(q)=\pi_C(q_1)$,
$D_C(p_1)=D_C(p)$ and $D_C(q_1)=D_C(q)$; as $n(p_1)=n(q_1)=0$, the
polynomials $p_1$ and $q_1$ are of degree $\leq 1$, so $\pi_C(p_1)=\pi_C(q_1)$
implies that $p_1=q_1$, and then we have $D_C(p)=D_C(p_1)=D_C(q_1)=D_C(q)$.

So it suffices to prove the claim. Let $p\in C\{X\}$ such that
$n(p)\geq 1$, and let $m$ be a monomial of degree $\geq 2$ appearing
in $p$. Write $m=c x_{\gamma_1}\ldots x_{\gamma_k}$ with $c\in C\setminus\{0\}$,
$k\geq 2$ and $\gamma_1,\ldots,\gamma_k\in\Gamma$, where, for every
$\gamma\in\Gamma$, we denote by $x_\gamma$ the corresponding element of
$X$. Let $r=p-m$ and set $p_1=r+c x_{\gamma_1\gamma_2}x_{\gamma_3}\ldots x_{\gamma_k}$.
Then $n(p_1)=n(p)-1$ and $\pi_C(p_1)=\pi_C(p)$.
It remains to prove that $D_C(p_1)=D_C(p)$.
Let $u_\Theta:\Of(\GL_{d,A}^X)^{\GL_{d,A}}\ra B$ be the morphism
of $A$-algebras corresponding to the image of $\Theta$ in $\LPC^1_{\Gamma,d}(B)$.
We have
\[D_C(p)=\alpha_X^1(\Theta)_C(p)=u_\Theta(\det\nolimits_{\id_X}(p))\quad\mbox{and}
\quad D_C(p_1)=u_\Theta(\det\nolimits_{\id_X}(p_1).\]
As $\Theta$ is in $\LPC_{\Gamma,d}(B)$, the morphism $u_\Theta$ vanishes on
$I_{\Gamma,G}$. So, to show that $D_C(p)=D_C(p_1)$, it suffices to note
that $p_1-p=\varphi_{\gamma_1,\gamma_2}(F)$, with $F\in\Of(\GL_{d,A}\times
\GL_{d,A}^X)^{\GL_{d,A}}$ the function $(g,(g_\gamma)_{\gamma\in\Gamma})\mapsto
\det\left(r((g_\gamma)_{\gamma\in\Gamma})+c gg_{\gamma_3}\ldots g_{\gamma_k}\right)$.

\end{enumerate}
\end{proof}

\section{From determinants to pseudocharacters}
\label{DetLPC}

The main result of this note is the following theorem.

\begin{thm}\label{thm_main}
Let $A$ be a commutative ring and $d$ be a positive integer.
\begin{enumerate}
\item Let $X$ be a set. 
The morphism $\alpha^1_X:\LPC^1_{X,d}\ra
\Det(A\{X\},d)$ of Proposition~\ref{prop_LPCDet}(i)
corresponds by the isomorphisms $\LPC^1_{X,d}\simeq\Spec(\Of(\GL_d^X)^{\GL_d})$
and $\Det_{A\{X\},d}\simeq\Spec(\Of(M_d^X)^{\GL_d})$ of
Proposition~\ref{prop_rep_LPC1} and Corollary~\ref{cor_DetX}
to the restriction morphism $\Of(M_d^X)^{\GL_d}\ra\Of(\GL_d^X)^{\GL_d}$.
Moreover, $\alpha^1_X$ is injective, and, for every commutative $A$-algebra
$B$, an element $D$ of $\Det_{A\{X\},d}(B)$ is in the image of $\alpha^1_X$ if
and only if $D_B(x)\in B^\times$ for every $x\in X$.

\item For every set $\Gamma$, the morphism $\alpha_\Gamma:\LPC_{\Gamma,d}\ra
\Det(A[\Gamma],d)$ of Proposition~\ref{prop_LPCDet}(ii)
is an isomorphism.

\end{enumerate}
\end{thm}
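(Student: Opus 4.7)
The plan is to unwind definitions for~(i) and then reduce~(ii) to a verification of condition~\ref{LPC2} on Donkin's generators. For~(i), by Proposition~\ref{prop_rep_LPC1} and Corollary~\ref{cor_DetX}, the schemes $\LPC^1_{X,d}$ and $\Det_{A\{X\},d}$ are represented respectively by $\Of(\GL_d^X)^{\GL_d}$ and $\Of(M_d^X)^{\GL_d}$, with universal pseudocharacter $\Theta^\univ_n(f)(x_1,\ldots,x_n)=f_{x_1,\ldots,x_n}$ and universal determinant $p\mapsto\det(\rho^\univ(p))$. Unwinding the construction of $\alpha^1_X$ in Section~\ref{LPCDet}, the regular function $(g_x)\mapsto\det(p(g_x))$ on $\GL_d^X$ is exactly the restriction of the regular function $\det(\rho^\univ(p))$ on $M_d^X$, so $\alpha^1_X$ corresponds, on coordinate rings, to the restriction morphism $\Of(M_d^X)^{\GL_d}\ra\Of(\GL_d^X)^{\GL_d}$. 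Since $\GL_d^X\hookrightarrow M_d^X$ is the open subscheme where all $\det(A_x)$ are invertible, and the $\det(A_x)$ are already $\GL_d$-invariant, Lemma~\ref{lemme_no_way} (applied over $\Z$ and base-changed via Remark~\ref{rmk_BC}) shows that this restriction map is nothing but the localization $\Of(M_d^X)^{\GL_d}\ra\Of(M_d^X)^{\GL_d}[\det(A_x)^{-1}:x\in X]$. Localizations are epimorphisms of rings, which yields the injectivity of $\alpha^1_X$; and a morphism $v:\Of(M_d^X)^{\GL_d}\ra B$ factors through this localization if and only if $v(\det(A_x))\in B^\times$ for all $x\in X$, which, since $v(\det(A_x))=D_B(x)$, gives the description of the image.

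For~(ii), injectivity of $\alpha_\Gamma$ is inherited from~(i) via the inclusion $\LPC_{\Gamma,d}\hookrightarrow\LPC^1_{X,d}$. For surjectivity, given $D'\in\Det_{A[\Gamma],d}(B)$, pull back along $A\{X\}\twoheadrightarrow A[\Gamma]$ (where $X$ is the underlying set of $\Gamma$) to obtain $D\in\Det_{A\{X\},d}(B)$. Multiplicativity of $D'$ and the identity $\gamma\gamma^{-1}=1$ in $\Gamma$ give $D_B(x_\gamma)=D'_B(\gamma)\in B^\times$ for every $\gamma\in\Gamma$, so part~(i) produces a unique $\Theta\in\LPC^1_{X,d}(B)$ with $\alpha^1_X(\Theta)=D$. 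The main obstacle is then to verify that this $\Theta$ satisfies condition~\ref{LPC2}.

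For that verification, we use Donkin's theorem (as in the proof of Corollary~\ref{cor_DetX}): the $A$-algebra $\Of(\GL_d^n)^{\GL_d}$ is generated by the characteristic polynomial coefficients $\sigma_r(w(A_1,\ldots,A_n))$, for monomial words $w$ and $1\leq r\leq d$, together with the $\det(A_i)^{-1}$. With $\gamma_1,\ldots,\gamma_{n+1}\in\Gamma$ fixed, both sides of~\ref{LPC2} are $A$-algebra maps $\Of(\GL_d^n)^{\GL_d}\ra B$ in the variable $f$ (the operation $f\mapsto\widehat{f}$ is itself an algebra morphism), so it is enough to verify the identity on the above generators. Combining the definition of $\alpha^1_X$ with Example~\ref{ex_Lambda} yields the key identification
\[\Theta_n(\sigma_r\circ w)(\delta_1,\ldots,\delta_n)=\Lambda_{r,B}\left(w(x_{\delta_1},\ldots,x_{\delta_n})\right)\]
for all $\delta_i\in\Gamma$, where $\Lambda_r$ denotes Chenevier's invariant attached to $D$. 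Since $D$ is pulled back from $D'$, both sides of~\ref{LPC2} then reduce to $\Lambda_{r,B}^{D'}(w(\gamma_1,\ldots,\gamma_{n-1},\gamma_n\gamma_{n+1}))$ computed in $B[\Gamma]$, using that $w(g_1,\ldots,g_{n-1},g_ng_{n+1})$ evaluated at $(\gamma_1,\ldots,\gamma_{n+1})$ and $w(g_1,\ldots,g_n)$ evaluated at $(\gamma_1,\ldots,\gamma_{n-1},\gamma_n\gamma_{n+1})$ yield the same element of $B[\Gamma]$. The generators $\det(A_i)^{-1}$ are handled by the same reasoning, using only multiplicativity of $D'_B$.
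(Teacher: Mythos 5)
Your proposal is correct and follows essentially the same route as the paper: part~(i) is the observation that on coordinate rings $\alpha^1_X$ is the localization $\Of(M_d^X)^{\GL_d}\ra\Of(M_d^X)^{\GL_d}[\det(A_x)^{-1}]=\Of(\GL_d^X)^{\GL_d}$, and part~(ii) reduces condition~\ref{LPC2} to Donkin's generators $\sigma_r\circ w$, where it becomes the statement that the characteristic-polynomial coefficients $\Lambda_r$ of the pulled-back determinant only depend on the image of the word in $B[\Gamma]$. The only (harmless) cosmetic differences are that the paper first reduces to $A=\Z$ --- which is the cleanest way to justify the base-changed forms of Donkin's theorem and of Corollary~\ref{cor_DetX} that you invoke over a general $A$ --- and that it checks surjectivity in~(ii) on the universal element of $\Det_{\Z[\Gamma],d}$ rather than on an arbitrary $D'$.
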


\begin{proof}
By Remark~\ref{rmk_BC}
and Proposition~\ref{prop_BC}(i)(b), we may assume that $A=\Z$.

Let $X$ be a set.
As at the end of Section~\ref{Det}, we denote the universal matrices
representation by $\rho^\univ:X\ra M_d(F_X(d))$, where $F_X(d)=\Z[x_{i,j},\ 
x\in X,\ 1\leq i,j\leq d]$. The universal element of
$\Det_{\Z\{X\},d}$ is then $D^\univ=D_{\rho^\univ}:\Z\{X\}\ra E_X(d)$, where
$E_X(d)$ is generated by the coefficients of the characteristic polynomials
of the $\rho^\univ(p)$, $p\in\Z\{X\}$.

We prove (i).
The first statement follows from the fact that $\alpha^1_X(D_\rho)=\Theta_\rho$
for every commutative ring $A$ and every map $\rho:\Gamma\ra\GL_d(A)$.
For the injectivity of $\alpha^1_X$, it suffices to prove that
$\Of(\GL_d^X)^{\GL_d}$ is a localization of $\Of(M_d^X)^{\GL_d}$, but this
follows from the fact that $\Of(\GL_d^X)$ is the localization of
$\Of(M_d^X)$ by the multiplicative set generated by the functions
$\det_{x}:(g_y)_{y\in X}\mapsto\det(g_{x})$, $x\in X$, which are in
$\Of(M_d^X)^{\GL_d}$.
Finally, let $A$ be a commutative ring, let $D\in\Det_{\Z\{X\},d}(A)$, and
let $u:\Of(M_d^X)^{\GL_d}\ra A$ be the
morphism of rings corresponding to $D$. Then $D$ is the image of $\alpha^1_X$
if and only if $u$ extends to a morphism of rings $\Of(\GL_d^X)^{\GL_d}\ra A$, 
which is equivalent to the condition that $u(\det_x)\in A^\times$ for every
$x\in A$. As
$u(\det_x)=u(\det(\rho^\univ(x)))=D_A(x)$ for every $x\in X$, we get the
conclusion.

We prove (ii). Let $X$ be the underlying set of $\Gamma$.
We have a commutative diagram
\[\xymatrix{\LPC_{\Gamma,d}\ar[r]^-{\alpha_\Gamma}\ar@{^{(}->}[d] &
\Det_{\Z[\Gamma],d}\ar@{^{(}->}[d] \\
\LPC^1_{\Gamma,d}\ar[r]_-{\alpha^1_X} & \Det_{\Z\{X\},d}}\]
and $\alpha^1_X$ is injective by (i), so it suffices to check that the
image of $\alpha_\Gamma$ is $\Det_{\Z[\Gamma],d}$. For this, we check that
the universal element of $\Det_{\Z[\Gamma],d}$ is in the image of $\alpha_\Gamma$. 
Let
$\varphi:E_X(d)\simeq(\Gamma_\Z^d(\Z\{X\}))^\ab\ra A:=(\Gamma_\Z(\Z[\Gamma]))
^\ab$ be induced by the quotient map $\pi:\Z\{X\}\ra\Z[\Gamma]$. 
The universal element $D$ of $\Det_{\Z[\Gamma],d}$ is defined by
$D\circ\pi=\varphi\circ D^\univ$, and its preimage
by $\alpha^1_X$ is the element $\Theta$ of $\LPC^1_{\Gamma,d}$ defined
by
\[\Theta_n(f)(\gamma_1,\ldots,\gamma_n)=\varphi(\Theta^\univ_n(f)(\gamma_1,\ldots,
\gamma_n)),\]
where $\Theta^\univ=(\alpha^1_X)^{-1}(D^\univ)$.
It suffices to show that $\Theta$ satisfies
condition \ref{LPC2}; we will then have $\Theta\in\LPC_{\Gamma,d}(A)$ and
$\alpha_\Gamma(\Theta)=D$.
Let $n$ be a positive integer and
$\gamma_1,\ldots,\gamma_{n+1}\in\Gamma$. We want to check that
$\Theta_n(f)(\gamma_1,\ldots,\gamma_{n-1},\gamma_n\gamma_{n+1})=
\Theta_{n+1}(\widehat{f})(\gamma_1,\ldots,\gamma_{n+1})$ for every
$f\in\Of(\GL_d^n)^{\GL_d}$. As both sides are $\Z$-algebra morphisms in
$f$, it suffices to check it on generators of $\Of(\GL_d^n)^{\GL_d}$.
So, by results of Donkin (cf. \cite[3.1]{Donkin}), we may assume that
\[f(g_1,\ldots,g_n)=\Lambda_k(g_{i_1}\ldots g_{i_r}),\]
where $k\in\{1,\ldots,n\}$, $\Lambda_k$ is the $k$th coefficient of
the characteristic polynomial (i.e. $\Lambda_k(g)=(-1)^k\Tr(\Lambda^k g)$),
$r\in\Nat$ and $i_1,\ldots,i_r\in\{1,2,\ldots,n\}$.
For every $\gamma\in\Gamma$, we denote by $x_\gamma$ the element of $X$
corresponding to $\gamma$. For $i\in\{1,2,\ldots,n-1\}$,
we set $y_i=z_i=x_{\gamma_i}$, and we also set $y_n=x_{\gamma_n\gamma_{n+1}}$,
$z_n=x_{\gamma_n}x_{\gamma_{n+1}}$; these are all elements of $\Z\{X\}$.
We then have
\begin{align*}
\Theta_n(f)(\gamma_1,\ldots,\gamma_{n-1},\gamma_n\gamma_{n+1}) &=
\varphi(\Theta^\univ_n(f)(\gamma_1,\ldots,\gamma_{n-1},\gamma_n\gamma_{n+1}))\\
& = \varphi(f(\rho^\univ(x_{\gamma_1}),\ldots,\rho^\univ(x_{\gamma_{n-1}}),
\rho^\univ(x_{\gamma_n\gamma_{n+1}})))\\
& = \varphi(\Lambda_k(\rho^\univ(y_{i_1}\ldots y_{i_r})))
\end{align*}
and
\begin{align*}
\Theta_{n+1}(\widehat{f})(\gamma_1,\ldots,\gamma_{n+1}) &=
\varphi(\Theta^\univ_{n+1}(\widehat{f})(\gamma_1,\ldots,\gamma_{n+1}))\\
& = \varphi(\widehat{f}(\rho^\univ(x_{\gamma_1}),\ldots,\rho^\univ(x_{\gamma_{n+1}})))\\
& = \varphi(\Lambda_k(\rho^\univ(z_{i_1}\ldots z_{i_r}))).
\end{align*}
As $\pi\circ\det\circ\rho^\univ=
\varphi\circ D^\univ=D\circ\pi$ as polynomial laws, we get a commutative
diagram
\[\xymatrix{\Z\{X\}[t]\ar[r]^-{\det\circ\rho^\univ}
\ar[d]_{\pi\otimes\id_{\Z[t]}} &
E_X(d)[t]\ar[d]^{\varphi\otimes\id_{\Z[t]}} \\
\Z[\Gamma][t]\ar[r]_-{D} &
(\Gamma_\Z^d(\Z[\Gamma]))^\ab[t]
}\]
As $(\pi\otimes\id_{\Z[t]})(t-y_{i_1}\ldots y_{i_r})=
(\pi\otimes\id_{\Z[t]})(t-z_{i_1}\ldots z_{i_r})$, we get that
\begin{equation}\tag{**}\label{eq_det}
(\varphi\otimes\id_{\Z[t]})(\det\circ\rho^\univ(t-y_{i_1}\ldots y_{i_r}))
=(\varphi\otimes\id_{\Z[t]})(\det\circ\rho^\univ(t-z_{i_1}\ldots z_{i_r})).
\end{equation}
But, for every $p\in\Z\{X\}$, we have
\[\det\circ\rho^\univ(t-p)=t^d+\sum_{k=1}^d\Lambda_k(\rho^\univ(p))t^{d-k}
\in E_X(d)[t],\]
so identity \eqref{eq_det} implies that
\[\varphi(\Lambda_k(\rho^\univ(y_{i_1}\ldots y_{i_r})))=
\varphi(\Lambda_k(\rho^\univ(z_{i_1}\ldots z_{i_r}))),\]
as desired.

\end{proof}

\bibliographystyle{plain}
\bibliography{main}

\def\cprime{$'$}
\begin{thebibliography}{10}

\bibitem{Alper}
Jarod Alper.
\newblock Adequate moduli spaces and geometrically reductive group schemes.
\newblock {\em Algebr. Geom.}, 1(4):489--531, 2014.

\bibitem{BHKT}
Gebhard B{\"o}ckle, Michael Harris, Chandrashekhar Khare, and Jack~A. Thorne.
\newblock {{\(\hat{G}\)}}-local systems on smooth projective curves are
  potentially automorphic.
\newblock {\em Acta Math.}, 223(1):1--111, 2019.

\bibitem{Chen}
William Chen.
\newblock \textit{Two base change results for rings of invariants}.
\newblock
  \url{https://static1.squarespace.com/static/60d92f4398f4bb6a7b5a99c6/t/61651a3abe21161e3ed6e7e5/1634015802361/base_change.pdf},
  2021.

\bibitem{Chenevier1}
Ga\"etan Chenevier.
\newblock \textit{Repr\'esentations galoisiennes automorphes et cons\'equences
  arithm\'etiques des conjectures de Langlands et Arthur (th\`ese
  d'habilitation)}.
\newblock \url{http://gaetan.chenevier.perso.math.cnrs.fr/hdr/HDR.pdf}, 2013.

\bibitem{Chenevier2}
Ga{\"e}tan Chenevier.
\newblock The {{\(p\)}}-adic analytic space of pseudocharacters of a profinite
  group and pseudorepresentations over arbitrary rings.
\newblock In {\em Automorphic forms and Galois representations. Proceedings of
  the 94th London Mathematical Society (LMS) -- EPSRC Durham symposium, Durham,
  UK, July 18--28, 2011. Volume 1}, pages 221--285. Cambridge: Cambridge
  University Press, 2014.

\bibitem{Donkin}
Stephen Donkin.
\newblock Invariants of several matrices.
\newblock {\em Invent. Math.}, 110(2):389--401, 1992.

\bibitem{Jantzen}
Jens~Carsten Jantzen.
\newblock {\em Representations of algebraic groups}, volume 107 of {\em
  Mathematical Surveys and Monographs}.
\newblock American Mathematical Society, Providence, RI, second edition, 2003.

\bibitem{Lafforgue1}
Vincent Lafforgue.
\newblock Chtoucas pour les groupes r\'eductifs et param\'etrisation de
  langlands globale.
\newblock {\em J. Am. Math. Soc.}, 31(3):719--891, 2018.

\bibitem{Lafforgue2}
Vincent Lafforgue.
\newblock Shtukas for reductive groups and {Langlands} correspondence for
  function fields.
\newblock In {\em Proceedings of the international congress of mathematicians,
  ICM 2018, Rio de Janeiro, Brazil, August 1--9, 2018. Volume I. Plenary
  lectures}, pages 635--668. Hackensack, NJ: World Scientific; Rio de Janeiro:
  Sociedade Brasileira de Matem{\'a}tica (SBM), 2018.

\bibitem{Mumford}
D.~Mumford and J.~Fogarty.
\newblock {\em Geometric invariant theory. 2nd enlarged ed}, volume~34 of {\em
  Ergeb. Math. Grenzgeb.}
\newblock Springer-Verlag, Berlin, 1982.

\bibitem{Popp}
Herbert Popp.
\newblock {\em Moduli theory and classification theory of algebraic varieties},
  volume 620 of {\em Lect. Notes Math.}
\newblock Springer, Cham, 1977.

\bibitem{Quast}
Julian Quast.
\newblock \textit{Deformation of $G$-valued pseudocharacters}.
\newblock
  \url{http://www.julianquast.de/files/Deformations_of_G-valued_Pseudocharacters.pdf},
  2023.

\bibitem{Richardson}
R.~W. Richardson.
\newblock Conjugacy classes of n-tuples in {Lie} algebras and algebraic groups.
\newblock {\em Duke Math. J.}, 57(1):1--35, 1988.

\bibitem{Roby}
N.~Roby.
\newblock Lois polyn{\^o}mes et lois formelles en th{\'e}orie des modules.
\newblock {\em Ann. Sci. {\'E}c. Norm. Sup{\'e}r. (3)}, 80:213--348, 1963.

\bibitem{Roby2}
Norbert Roby.
\newblock Lois polyn{\^o}mes multiplicatives universelles.
\newblock {\em C. R. Acad. Sci., Paris, S{\'e}r. A}, 290:869--871, 1980.

\bibitem{Seshadri}
C.~S. Seshadri.
\newblock Geometric reductivity over arbitrary base.
\newblock {\em Adv. Math.}, 26:225--274, 1977.

\bibitem{Taylor}
Richard Taylor.
\newblock Galois representations associated to {Siegel} modular forms of low
  weight.
\newblock {\em Duke Math. J.}, 63(2):281--332, 1991.

\bibitem{Vaccarino}
Francesco Vaccarino.
\newblock Homogeneous multiplicative polynomial laws are determinants.
\newblock {\em J. Pure Appl. Algebra}, 213(7):1283--1289, 2009.

\bibitem{Wiles}
A.~Wiles.
\newblock On ordinary {{\(\lambda\)}}-adic representations associated to
  modular forms.
\newblock {\em Invent. Math.}, 94(3):529--573, 1988.

\end{thebibliography}

\end{document}